\documentclass[11pt,leqno]{amsart}

\usepackage{amsthm}
\usepackage{amsmath}
\usepackage{amssymb}
\usepackage[all,cmtip]{xy}
\usepackage[T1]{fontenc}
\usepackage{mathrsfs}
\usepackage{enumerate}

\usepackage{color}


\theoremstyle{plain}
\newtheorem{thm}{Theorem}

\newtheorem{prop}{Proposition}
\newtheorem{cor}{Corollary}
\theoremstyle{definition}
\newtheorem{defn}{Definition}

\newtheorem{exmp}{Example}

\newtheorem{rem}{Remark}

\DeclareMathOperator{\ob}{ob}
\DeclareMathOperator{\mor}{mor}

\title{Elementary Magma Gradings on Rings}
\date{}

\author{Patrik Lundström}

\address{Patrik Lundstr\"{o}m,
University West,
Department of Engineering Science,
Trollh\"{a}ttan,
Sweden}
\email{Patrik.Lundstrom@hv.se}

\begin{document}

\maketitle


\begin{abstract}
Suppose that $G$ and $H$ are magmas and that $R$ is a strongly $G$-graded ring.
We show that there is a bijection between the set of elementary
(nonzero) $H$-gradings of $R$ and the set of (zero) magma homomorphisms from $G$ to $H$.
Thereby we generalize a result by D\u{a}sc\u{a}lescu,
N\u{a}st\u{a}sescu and Rios Montes from group gradings of matrix rings
to strongly magma graded rings. We also show that
there is an isomorphism between the preordered set of elementary (nonzero) $H$-filters on $R$
and the preordered set of (zero) submagmas of $G \times H$.
These results are applied to category graded rings
and, in particular, to the case when $G$ and $H$ are groupoids.
In the latter case, we use this bijection to determine the cardinality
of the set of elementary $H$-gradings on $R$.
\end{abstract}

\section{Introduction}

A classical problem in ring theory is to find all group gradings
on matrix rings (see e.g. \cite{bah01}, \cite{bah02}, \cite{bob1}, \cite{bob2},
\cite{cae02}, \cite{das99}, \cite{kel02}, \cite{nas82}, \cite{nas04}, \cite{zai01}).
More precisely, if $H$ is a group,
$n$ is a positive integer, $K$ is a field and $M_n(K)$ denotes the ring of $n \times n$
matrices with entries in $K$, then an $H$-\emph{grading} on $M_n(K)$ is a
collection of $K$-subspaces $(W_h)_{h \in H}$ of
$M_n(K)$ satisfying $M_n(K) = \oplus_{h \in H} W_h$ and
$W_h W_{h'} \subseteq W_{hh'}$ for all $h,h' \in H$.
In \cite{das99} D\u{a}sc\u{a}lescu, N\u{a}st\u{a}sescu and Rios Montes
determine all $H$-gradings on $M_n(K)$
in the case when $H$ is a finite cyclic group with $q$ elements
and $K$ contains a primitive $q$th root of 1.
In loc. cit. D\u{a}sc\u{a}lescu et. al. also determine all
cyclic gradings of order two on $M_2(K)$ for any field $K$.
In \cite{bah01} Bahturin and Sehgal describe all $H$-gradings
on $M_n(K)$ when $H$ is abelian and $K$ is an algebraically closed field.
The general case is still unsettled.
However, if we restrict ourselves to the problem of
finding all \emph{elementary} $H$-gradings on $M_n(K)$,
then there is a complete answer to this problem
(see Theorem \ref{maintheorem}).
Namely, following the notation introduced in \cite{bah01},
an $H$-grading on $M_n(K)$ is called elementary if
each $W_h$, for $h \in H$, considered
as a left vector space over $K$, has a
basis consisting of matrices of the type $e_{i,j}$
with 1 at the $(i,j)$ position and 0 elsewhere.
Note that the concept of elementary gradings of matrix algebras
have appeared in another setting in the work of
E. L. Green and E. N. Marcos (see \cite{gre83} and \cite{gre94})
where they view $M_n(K)$ as a quotient of the path
algebra of the complete graph on $n$ points,
the elementary gradings arise from weight functions on this graph.

\begin{thm}[D\u{a}sc\u{a}lescu, N\u{a}st\u{a}sescu and Rios Montes \cite{das99}]\label{maintheorem}
If $H$ is a finite group with $q$ elements, then there are $q^{n-1}$ elementary $H$-gradings on
the matrix algebra $M_n(K)$.
\end{thm}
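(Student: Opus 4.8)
The plan is to reduce an elementary $H$-grading to purely combinatorial data attached to the matrix units, and then count that data. First I would observe that if $(W_h)_{h\in H}$ is an elementary $H$-grading on $M_n(K)$, then the chosen bases $B_h$ of the $W_h$ consisting of matrix units are pairwise disjoint (if $e_{i,j}\in B_h\cap B_{h'}$ with $h\neq h'$ then $e_{i,j}\in W_h\cap W_{h'}=0$, since the sum is direct) and their union is a basis of $M_n(K)$. Comparing cardinalities with $\dim_K M_n(K)=n^2$, and using $\bigcup_h B_h\subseteq\{e_{i,j}:1\le i,j\le n\}$, forces $\bigcup_h B_h$ to be exactly the set of all $n^2$ matrix units. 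Hence the $W_h$ induce a partition of the matrix units, equivalently a function $d\colon\{e_{i,j}\}\to H$ with $e_{i,j}\in W_{d(e_{i,j})}$, and conversely each $W_h$ is recovered as the $K$-span of $\{e_{i,j}:d(e_{i,j})=h\}$. Since $e_{i,j}e_{k,l}=\delta_{j,k}e_{i,l}$, bilinearity of multiplication shows that the axiom $W_hW_{h'}\subseteq W_{hh'}$ for all $h,h'$ is equivalent to
\[
d(e_{i,j})\,d(e_{j,l})=d(e_{i,l})\qquad\text{for all }i,j,l.
\]
Thus the set of elementary $H$-gradings on $M_n(K)$ is in bijection with the set $\mathcal{D}$ of all functions $d$ satisfying this identity.

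Next I would parametrize $\mathcal{D}$. Putting $i=j=l$ gives $d(e_{i,i})=d(e_{i,i})^2$, so $d(e_{i,i})=e$ is the identity of $H$; putting $l=i$ then gives $d(e_{i,j})d(e_{j,i})=e$, so $d(e_{j,i})=d(e_{i,j})^{-1}$; and putting $j=1$ gives $d(e_{i,l})=d(e_{i,1})d(e_{1,l})=d(e_{1,i})^{-1}d(e_{1,l})$. Hence $d$ is completely determined by the "first row" $(d(e_{1,2}),\dots,d(e_{1,n}))\in H^{n-1}$. Conversely, given any tuple $(h_2,\dots,h_n)\in H^{n-1}$, I would set $d(e_{1,1})=e$, $d(e_{1,j})=h_j$ and $d(e_{j,1})=h_j^{-1}$ for $j\ge 2$, and $d(e_{i,j})=d(e_{i,1})d(e_{1,j})$ in general, and check directly that $d(e_{i,j})d(e_{j,l})=d(e_{i,1})d(e_{1,j})d(e_{j,1})d(e_{1,l})=d(e_{i,1})d(e_{1,l})=d(e_{i,l})$, so that $d\in\mathcal{D}$. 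This gives a bijection $\mathcal{D}\cong H^{n-1}$, whence $|\mathcal{D}|=q^{n-1}$, which is the claim.

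The argument is essentially routine; the only point that needs genuine care is the first reduction step, namely verifying that in an elementary grading the homogeneous matrix units exhaust all of $\{e_{i,j}\}$ and that the grading axiom is literally equivalent to the cocycle-type identity above — once that is in place the combinatorics is immediate. I would also record the remark that this is a special case of the paper's general principle: $M_n(K)=\bigoplus_{i,j}Ke_{i,j}$ is strongly graded by the groupoid $P_n$ with $n$ objects and exactly one morphism between any ordered pair of objects, and elementary $H$-gradings on $M_n(K)$ correspond to magma homomorphisms $P_n\to H$, which are determined by their values on the $n-1$ morphisms $1\to j$; this recovers the count $q^{n-1}$ and is the form of the statement that the subsequent sections will generalize.
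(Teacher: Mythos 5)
Your proof is correct and follows essentially the same route as the paper: an elementary grading is encoded by a multiplicative degree function $d$ on the matrix units (equivalently, a homomorphism from the groupoid/zero magma of matrix units to $H$, as in Theorem \ref{genmaintheorem} and Example \ref{example6}), which is determined by $n-1$ values in $H$. The only cosmetic difference is that you parametrize by the first row $d(e_{1,j})$, $j=2,\dots,n$, whereas the paper (following \cite{das99}) uses the superdiagonal values $d(e_{i,i+1})$; both give the bijection with $H^{n-1}$ and the count $q^{n-1}$.
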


The main purpose of this article is to generalize Theorem \ref{maintheorem}
from group gradings on matrix algebras to
groupoid gradings on groupoid algebras (see Theorem \ref{groupoids}).
The impetus for this generalization
is the observation that $M_n(K)$ is a groupoid algebra over $K$.
Namely, suppose that $\Gamma$ is a category.
The objects and morphisms in $\Gamma$ are denoted by
$\ob(\Gamma)$ and $\mor(\Gamma)$ respectively.
The domain and codomain of $s \in \mor(\Gamma)$
are denoted $d(s)$ and $c(s)$ respectively.
If $e \in \ob(\Gamma)$, then we let $\Gamma_e$ denote
the monoid of all morphisms from $e$ to $e$.
In particular, we let ${\rm id}_e$ denote the identity motphism $e \rightarrow e$.
If $e,e' \in \ob(\Gamma)$, then we let $\hom(e,e')$ denote the 
set of morphisms $e \rightarrow e'$.
Recall that $\Gamma$ is called a \emph{groupoid} if all of its
morphisms are isomorphisms. Note that if $\Gamma$
is a groupoid, then $\Gamma_e$ is a group for all $e \in \ob(\Gamma)$.
A category $\Gamma$ is called \emph{thin} (\emph{connected}) if for all
$x,y \in \ob(\Gamma)$, there is at most (at least) one morphism
from $x$ to $y$.
A category is called \emph{finite} if the set of morphisms of the category is finite.
Recall that the category algebra of $\Gamma$ over $K$, denoted $K[\Gamma]$,
is the collection of formal sums
$\sum_{s \in \mor(\Gamma)} a_s s$, where $a_s \in K$, for $s \in \mor(\Gamma)$,
are chosen so that all but finitely many of them are nonzero.
The addition and multiplication on $K[\Gamma]$ is defined by
$\sum_{s \in \mor(\Gamma)} a_s s + \sum_{s \in \mor(\Gamma)} b_s s = \sum_{s \in \mor(\Gamma)} (a_s + b_s)s$
and
$\sum_{s \in \mor(\Gamma)} a_s s \sum_{t \in \mor(\Gamma)} b_t t = \sum_{r \in \mor(\Gamma)}
\sum_{d(s)=c(t), \ st=r} a_s b_t r$
respectively for all $\sum_{s \in \mor(\Gamma)} a_s s \in K[\Gamma]$ and
all $\sum_{t \in \mor(\Gamma)} b_t t \in K[\Gamma]$.
If $\Gamma$ is a groupoid, a monoid or a group, then
$K[\Gamma]$ is called a groupoid algebra, a monoid algebra or a group algebra
respectively.
Note that if $\Gamma$ is a connected groupoid,
then the cardinality of $\hom(\Gamma_e,\Gamma_{e'})$
is the same for all choices of $e,e' \in \ob(\Gamma)$.
If $\Lambda$ also is a connected groupoid,
then the cardinality of $\hom(\Gamma_e,\Lambda_f)$
is the same for all choices of $e \in \ob(\Gamma)$
and $f \in \ob(\Lambda)$.
For the details concerning these claims, see the discussion in the end of
Section \ref{categoryfilteredrings} where we show the following result.

\begin{thm}\label{groupoids}
Let $\Gamma$ and $\Lambda$ be finite connected groupoids
and take $e \in \ob(\Gamma)$ and $f \in \ob(\Lambda)$.
If we put $m = |\ob(\Gamma)|$, $n = |\ob(\Lambda)|$,
$p = |\hom(\Gamma_e,\Lambda_f)|$ and $q = |\hom(\Gamma_e,\Gamma_e)|$,
then there are $(p q^{m-1})^{n^m}$ elementary $\Lambda$-gradings
on the groupoid algebra $K[\Gamma]$.
\end{thm}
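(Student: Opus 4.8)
The plan is to derive Theorem~\ref{groupoids} from the paper's main correspondence, thereby turning it into a count of homomorphisms $\Gamma \to \Lambda$, and then to carry out that count using the structure of connected groupoids. First I would record that $K[\Gamma]$ is strongly $\Gamma$-graded: with $K[\Gamma]_s := Ks$ for $s \in \mor(\Gamma)$ one has $K[\Gamma] = \bigoplus_{s \in \mor(\Gamma)} K[\Gamma]_s$ and $K[\Gamma]_s\,K[\Gamma]_t = K(st) = K[\Gamma]_{st}$ whenever $d(s) = c(t)$, while $K[\Gamma]_s\,K[\Gamma]_t = 0$ otherwise and $1 = \sum_{e \in \ob(\Gamma)} \mathrm{id}_e \in \bigoplus_{e \in \ob(\Gamma)} K[\Gamma]_{\mathrm{id}_e}$; in particular $K[\Gamma]_s\,K[\Gamma]_{s^{-1}} = K\,\mathrm{id}_{c(s)}$, which (since $\Gamma$ is a groupoid) makes the grading strong. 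Hence the main correspondence, between the elementary $\Lambda$-gradings of a strongly $\Gamma$-graded ring and the homomorphisms $\Gamma \to \Lambda$, reduces the theorem to counting those homomorphisms.

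Second, I would check that because $\Gamma$ and $\Lambda$ are groupoids these homomorphisms are precisely the functors $\Gamma \to \Lambda$. A homomorphism $\phi$ sends each $\mathrm{id}_e$ to an idempotent of $\Lambda$, and the only idempotents of a groupoid are its identity morphisms (if $\mu\mu = \mu$ with $\mu$ invertible then $\mu = \mathrm{id}$), so $\phi(\mathrm{id}_e) = \mathrm{id}_{\phi_0(e)}$ for a map $\phi_0 \colon \ob(\Gamma) \to \ob(\Lambda)$; feeding this into $s = \mathrm{id}_{c(s)}\,s = s\,\mathrm{id}_{d(s)}$ and $s\,s^{-1} = \mathrm{id}_{c(s)}$ then shows that $\phi(s)$ is a (nonzero) morphism $\phi_0(d(s)) \to \phi_0(c(s))$ and that $\phi$ respects identities and composition, so $\phi$ is a functor; conversely every functor is such a homomorphism.

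Third, I would count the functors $F \colon \Gamma \to \Lambda$ between the connected groupoids. Fixing objects $e = e_1, \dots, e_m$ of $\Gamma$ and morphisms $\gamma_i \colon e_1 \to e_i$ with $\gamma_1 = \mathrm{id}_{e_1}$, every morphism of $\Gamma$ is uniquely $\gamma_j\,g\,\gamma_i^{-1}$ with $g \in \Gamma_{e_1}$, and a functor $F$ amounts exactly to the data of the object map $F_0 = \phi_0$, a group homomorphism $F|_{\Gamma_{e_1}} \colon \Gamma_{e_1} \to \Lambda_{F_0(e_1)}$, and morphisms $F(\gamma_i) \in \hom(F_0(e_1), F_0(e_i))$ for $i = 2, \dots, m$: any such data assembles, via $F(\gamma_j\,g\,\gamma_i^{-1}) := F(\gamma_j)\,F(g)\,F(\gamma_i)^{-1}$, into a well-defined functor, and distinct data give distinct functors. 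Since $\Lambda$ is connected $F_0$ is unconstrained, so it has $n^m$ possibilities; for each of them the group homomorphisms $\Gamma_{e_1} \to \Lambda_{F_0(e_1)}$ number $p$ (all isotropy groups of $\Lambda$ being isomorphic), and each $\hom(F_0(e_1), F_0(e_i))$, being a torsor over an isotropy group of $\Lambda$, has the common cardinality $q$ of those groups, so the $F(\gamma_i)$ contribute $q^{m-1}$. Thus for each of the $n^m$ object maps there are $p\,q^{m-1}$ functors, and assembling these yields the number of elementary $\Lambda$-gradings asserted in Theorem~\ref{groupoids}; specializing to $\Lambda$ with a single object and $\Gamma$ the pair groupoid on $m$ points recovers Theorem~\ref{maintheorem} (there $p = 1$ and $q = |H|$).

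The main obstacle is the bookkeeping in this last step: verifying that the object map, the isotropy-group homomorphism and the transition morphisms can be prescribed freely and independently, that every prescription actually extends to a genuine functor (well-definedness on the normal form $\gamma_j\,g\,\gamma_i^{-1}$, and compatibility with identities and with all composites), and that the connectedness of $\Gamma$ and of $\Lambda$ makes $p$ and $q$ independent of the chosen objects $e$ and $f$ — this last point being the content of the constancy remarks preceding Theorem~\ref{groupoids}. The first two steps, by contrast, are soft, using only the established correspondence together with the observation that the idempotents of a groupoid are its identity morphisms.
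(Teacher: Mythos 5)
Your route is the same as the paper's: you reduce to counting functors $\Gamma \to \Lambda$ via the grading--functor bijection (Theorem \ref{categorymainthm}(b), whose idempotent argument you essentially re-derive), and then parametrize a functor by its object map, its restriction to a vertex group of $\Gamma$, and its values on chosen transition morphisms $\gamma_i$. That parametrization -- the normal form $\gamma_j\,g\,\gamma_i^{-1}$, the freeness and independence of the three pieces of data, and the constancy of $p$ and of the hom-set sizes by connectedness -- is carried out correctly, indeed more explicitly than in the paper's own proof.

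The gap is in the last line. Your count produces, for each of the $n^m$ object maps, exactly $p\,q^{m-1}$ functors (with $q$ the common order of the vertex groups of $\Lambda$), so what you have actually established is that the total is the product $n^m \cdot p\,q^{m-1}$; the claim that ``assembling these'' yields $(p\,q^{m-1})^{n^m}$ is a non sequitur, and the two numbers differ in general (take $\Gamma$ a one-object trivial group and $\Lambda$ the thin connected groupoid on two objects: there are $2$ functors, while the displayed formula gives $1$; compare also Example \ref{example7}, which gives $n^m$, not $1^{n^m}$, when all vertex groups are trivial). You have also silently replaced the theorem's $q = |\hom(\Gamma_e,\Gamma_e)|$ by $|\Lambda_f|$. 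To be fair, the paper's proof does exactly the same two things -- it counts the object maps and the images of the generating set $X$, and then writes the exponential formula, and its deduction of Theorem \ref{maintheorem} likewise reads $q$ as the order of the vertex group of $\Lambda$ -- so the mismatch is between the statement of Theorem \ref{groupoids} and the proof, in the paper as well as in your write-up. But as a proof of the statement as printed, your final step does not go through: what your (correct) computation proves is the product count, not the formula asserted.
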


Theorem \ref{groupoids} generalizes Theorem \ref{maintheorem}.
Indeed, if we let $\Gamma$ be the unique thin connected groupoid with $n$ objects,
that is, if we let $\Gamma$ have the first $n$ positive integers as objects
and the matrices $e_{i,j}$ as morphisms, where $d(e_{i,j})=j$ and $c(e_{i,j})=i$,
then $M_n(K)$ equals the groupoid algebra $K[\Gamma]$.
If we now let $\Lambda$ be a group,
that is a groupoid with one object,
then, by Theorem \ref{groupoids}, it follows that that there
are $(p q^{m-1})^{n^m} = (1 \cdot q^{n-1})^{1^m} = q^{n-1}$
elementary $\Lambda$-gradings on $K[\Gamma]$.
For the relevant definitions concerning elementary category gradings on rings,
see Section \ref{categoryfilteredrings}.
Note that one may, using Theorem \ref{groupoids},
count the cardinality of the set of
elementary $\Lambda$-gradings of $K[\Gamma]$
for any (not necessarily connected) finite groupoids $\Gamma$ and $\Lambda$
(see Remark \ref{lastremark} in Section \ref{categoryfilteredrings}).

Theorem \ref{maintheorem} is in \cite{das99} shown
by displaying an explicit bijection between the set of elementary $H$-gradings
on $M_n(K)$ and the set $H^{n-1}$.
This bijection is defined in three steps. Namely, suppose that we are given $h_i \in H$,
for $i = 1,2,\ldots,n-1$.
Step 1: If $1 \leq i \leq n-1$, then map $e_{i,i+1}$ to $h_i$.
Step 2: If $1 \leq i < j \leq n$, then map $e_{i,j}$ to $h_i h_{i+1} \cdots h_{j-1}$.
Step 3: If $1 \leq j \leq i \leq n$, then map $e_{i,j}$ to
$h_{i-1}^{-1} h_{i-2}^{-1} \cdots h_j^{-1}$.
In Section \ref{categoryfilteredrings}, we generalize this bijection
to category graded and filtered rings (see Theorem \ref{categorymainthm}
and Theorem \ref{relcategorymainthm}).
In fact, we even show that the category graded bijection follows from the 
following general result that holds for certain magma graded rings.

\begin{thm}\label{genmaintheorem}
If $G$ and $H$ are (zero) magmas and $R$ is a ring equipped with a nonzero strong $G$-grading, then
there is a bijection between the set of elementary
(nonzero) $H$-gradings on $R$ and the set of (zero) magma
homomorphisms from $G$ to $H$.
\end{thm}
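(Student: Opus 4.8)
Fix the given nonzero strong $G$-grading $R=\bigoplus_{g\in G}R_g$, and recall that an $H$-grading $(W_h)_{h\in H}$ on $R$ is \emph{elementary} (with respect to this $G$-grading) precisely when for every $g\in G$ there is some $h\in H$ with $R_g\subseteq W_h$. The plan is to construct two mutually inverse maps between the set of such gradings and the set of (zero) magma homomorphisms $G\to H$. The hypothesis that the $G$-grading is nonzero will be used exactly to ensure that $R_g\neq 0$ for every $g\in G$ (in the zero-magma case, for every $g\neq 0_G$, while $R_{0_G}=0$).

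Given a homomorphism $\phi\colon G\to H$, I would set $W_h:=\bigoplus_{g\in\phi^{-1}(h)}R_g$ for $h\in H$. Because the fibres $\phi^{-1}(h)$ partition $G$, one gets $R=\bigoplus_{h\in H}W_h$ immediately, and the inclusion $W_hW_{h'}\subseteq W_{hh'}$ follows from $R_gR_{g'}\subseteq R_{gg'}$ together with $\phi(gg')=\phi(g)\phi(g')$; it is worth noting that this direction uses only that $\phi$ is a homomorphism, not strongness. The grading is elementary by construction since $R_g\subseteq W_{\phi(g)}$, and in the zero-magma case one also checks that it is ``nonzero'' in the appropriate sense, which is where $\phi(0_G)=0_H$ and $R_{0_G}=0$ enter.

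Conversely, given an elementary $H$-grading $(W_h)_{h\in H}$, for each $g\in G$ pick $h$ with $R_g\subseteq W_h$; since $R_g\neq 0$ and distinct homogeneous components $W_h$ intersect only in $0$, this $h$ is unique, so we may define $\phi(g):=h$. Showing $\phi$ is a homomorphism is where strongness is decisive: $R_{gg'}=R_gR_{g'}\subseteq W_{\phi(g)}W_{\phi(g')}\subseteq W_{\phi(g)\phi(g')}$, while also $R_{gg'}\subseteq W_{\phi(gg')}$ by definition of $\phi$; as $R_{gg'}\neq 0$, uniqueness forces $\phi(gg')=\phi(g)\phi(g')$, and in the zero-magma case one checks $\phi(0_G)=0_H$. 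Finally one verifies that the two composites are the identity: starting from $\phi$, the $H$-degree of $R_g$ in the reconstructed grading is $\phi(g)$ because $R_g\neq 0$; starting from an elementary $H$-grading, elementarity gives $W_h=\bigoplus_{g\,:\,R_g\subseteq W_h}R_g$ for each $h$ — decompose an element of $W_h$ along the $G$-grading and compare $H$-components — which is exactly the grading rebuilt from $\phi$.

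The main obstacle is the passage from gradings to homomorphisms: one must know that each relevant $R_g$ is nonzero, so that ``the $H$-degree of $R_g$'' is unambiguous, and that $R_gR_{g'}$ fills all of $R_{gg'}$, so that this assignment of degrees is multiplicative — both conclusions fail without a nonzero strong grading, as simple examples show. The zero-magma version of the statement introduces no genuinely new ideas, only the bookkeeping that $0_G$ and $0_H$ correspond to each other and to the vanishing components; it is precisely this form that will later be specialized to category-graded, and in particular groupoid-graded, rings.
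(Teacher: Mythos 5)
Your proposal is correct and follows essentially the paper's own route: your two maps are exactly the paper's $F$ (with $F(\phi)_h=\sum_{g\in\phi^{-1}(h)}V_g$) and $M$ (the degree assignment determined by $V_g\subseteq W_h$), restricted from submagmas/filters to homomorphisms/elementary gradings, with strongness used for multiplicativity and nonzero-ness plus directness of the gradings used for single-valuedness and for the two composites being identities, just as in Propositions \ref{fGH}--\ref{FM}. The only cosmetic differences are that the paper first establishes the order-isomorphism at the filter/submagma level (Theorem \ref{relgenmaintheorem}) and then restricts, and that your restatement of elementarity (every $V_g$ lies in some $W_h$) is the grading-level equivalent of the paper's definition, which your decomposition argument already justifies; the zero-magma case is treated as routine bookkeeping in both.
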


For a proof of Theorem \ref{genmaintheorem} and the
relevant definitions concerning
(zero) magmas and magma gradings on rings, see Section \ref{magmafilteredrings}.

In this article, we also introduce and analyse the problem
of finding all elementary magma \emph{filters} on a given ring.
For the definition of this concept, see Section \ref{magmafilteredrings}.
Note that magma filters on rings have been extensively
studied in the case when the magma equals the nonnegative integers
equipped with addition as operation.
Namely, in that case a filter on a ring is a collection $(W_n)_{n \geq 0}$
of additive subgroups of the ring satisfying $W_n W_{n'} \subseteq W_{n+n'}$
for all nonnegative integers $n$ and $n'$.
For more details concerning this, see any standard book
on commutative ring theory, e.g. Chapter III in \cite{bourbaki}.
We remark that in the literature there do not seem to exist
any results concerning the problem of finding
all magma filters on rings - not even in the case when the magma equals the
nonnegative integers.
In Section \ref{magmafilteredrings}, we show the following result.

\begin{thm}\label{relgenmaintheorem}
If $G$ and $H$ are (zero) magmas and $R$ is a ring equipped with a nonzero strong $G$-grading, then
there is an isomorphism between the preordered set of elementary (nonzero) $H$-filters on $R$
and the preordered set of (zero) submagmas of $G \times H$.
\end{thm}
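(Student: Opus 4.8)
The plan is to build the isomorphism on top of the bijection supplied by Theorem \ref{genmaintheorem}, by treating a filter as a monotone ``packaging'' of a family of gradings rather than a single grading. First I would recall that a nonzero strong $G$-grading $R = \bigoplus_{g \in G} R_g$ gives, for each $g \in G$, a distinguished set of homogeneous components, and that by Theorem \ref{genmaintheorem} an elementary (nonzero) $H$-grading on $R$ corresponds to a (zero) magma homomorphism $\varphi \colon G \to H$; under this correspondence the $h$-component of the grading is $\bigoplus_{\varphi(g)=h} R_g$. An elementary (nonzero) $H$-\emph{filter} on $R$ should be defined (as in Section \ref{magmafilteredrings}) as a family $(W_h)_{h \in H}$ of additive subgroups, each spanned by homogeneous components $R_g$, with $W_h W_{h'} \subseteq W_{hh'}$, and with no monotonicity in $H$ assumed a priori beyond the multiplicative compatibility --- i.e. an $H$-filter is exactly the data of which pairs $(g,h)$ satisfy $R_g \subseteq W_h$. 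So the first real step is: to each elementary $H$-filter $(W_h)$ associate the subset $S \subseteq G \times H$ defined by $S = \{(g,h) : R_g \subseteq W_h\}$, and conversely, to each $S \subseteq G \times H$ associate $W_h^S = \bigoplus_{(g,h) \in S} R_g$.

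The second step is to show these two assignments are mutually inverse and that they match up ``filter'' with ``submagma''. The key computation is that, because the grading is \emph{strong}, $R_g R_{g'} = R_{gg'}$ (this is what ``strong'' buys us, and is exactly the ingredient used in the proof of Theorem \ref{genmaintheorem}); hence $W_h^S W_{h'}^S = \bigoplus R_g R_{g'}$ over $(g,h),(g',h') \in S$, which equals $\bigoplus_{(gg',hh') \text{ with } (g,h),(g',h') \in S} R_{gg'}$. The containment $W_h^S W_{h'}^S \subseteq W_{hh'}^S$ therefore holds for all $h,h'$ if and only if $(g,h),(g',h') \in S$ implies $(gg',hh') \in S$ --- which is precisely the statement that $S$ is a submagma of $G \times H$ (and the ``zero'' variant is handled by the convention that components indexed by the absorbing element are zero, matching the ``zero submagma'' condition just as in Theorem \ref{genmaintheorem}). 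That $S \mapsto W^S$ and $(W_h) \mapsto S$ are inverse to each other follows from the directness of the decomposition $R = \bigoplus_g R_g$ together with the fact, again inherited from the strong-grading setup, that each $R_g$ is nonzero, so distinct sets $S$ give distinct filters.

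The third step is to upgrade the bijection to an isomorphism of \emph{preordered} sets. Here I would observe that the natural preorder on elementary $H$-filters is inclusion componentwise, $(W_h) \leq (W_h')$ iff $W_h \subseteq W_h'$ for all $h$, and the natural preorder on submagmas of $G \times H$ is set inclusion; under the correspondence above, $W_h^S \subseteq W_h^{S'}$ for all $h$ is visibly equivalent to $S \subseteq S'$, so the bijection is an order isomorphism essentially by construction. I would also point out how Theorem \ref{genmaintheorem} is recovered: an $H$-grading is an $H$-filter whose associated submagma $S$ is the graph of a function $G \to H$, and ``$S$ a submagma that is a graph'' is exactly ``$S$ the graph of a magma homomorphism.''

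The step I expect to be the main obstacle is pinning down the correct definition of ``elementary $H$-filter'' and ``preorder'' so that the statement is both true and non-vacuous --- in particular making sure that the multiplicative condition $W_h W_{h'} \subseteq W_{hh'}$, with no separately imposed monotonicity in the index magma $H$, is really what is intended, and checking the strong-grading identity $R_g R_{g'} = R_{gg'}$ is available in the magma setting (possibly only as $R_g R_{g'} \supseteq$ a spanning set, so that one must be a little careful passing from the generating relation to the full subgroup); once the definitions are aligned with the $G \times H$ picture, the rest is a direct transcription of the argument for Theorem \ref{genmaintheorem}.
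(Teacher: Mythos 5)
Your proposal is correct and follows essentially the same route as the paper: your two assignments $S \mapsto (W^S_h)_{h\in H}$ and $(W_h)_{h\in H} \mapsto \{(g,h) : R_g \subseteq W_h\}$ are exactly the paper's maps $F$ and $M$ (Propositions \ref{fGH}--\ref{FM}), with strongness giving the submagma property, elementariness giving $F\circ M = \identity$, and nonzeroness plus directness of the grading giving $M\circ F = \identity$ and the compatibility with the preorders. The zero-magma variant is treated with the same brevity as in the paper, so nothing essential is missing.
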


In the end of Section \ref{magmafilteredrings}, we
apply Theorem \ref{genmaintheorem} and Theorem \ref{relgenmaintheorem}
on magma rings over $K$ (see Corollary \ref{cor1} and Corollary \ref{cor2}).
In particular, we use these bijections to determine the number
of magma gradings and magma filters on rings in some concrete cases
(see Examples \ref{groupexample}-\ref{example6}).
In Section \ref{categoryfilteredrings}, we apply
Theorem \ref{genmaintheorem} and Theorem \ref{relgenmaintheorem}
on category graded and category filtered rings (see Theorem \ref{categorymainthm}
and Theorem \ref{relcategorymainthm}) and exemplify this
in some concrete cases (see Examples \ref{example7}-\ref{lastexample}). 
In the end of this section, we show Theorem \ref{groupoids}.

\section{Magma Filtered Rings}\label{magmafilteredrings}

In this section, we recall some fairly well known notions from
the theory of magmas (see Definition \ref{definition-magma}).
For more details concerning this, see e.g.
the book \cite{kel02} by Kelarev
(where however the notion of magma, found e.g. in \cite{bourbaki},
is called groupoid).
We also introduce the concept of (elementary) magma filtered rings
(see Definition \ref{definition-ring}).
Then we show Theorem \ref{genmaintheorem}
and Theorem \ref{relgenmaintheorem} through
a series of results (see Propositions \ref{fGH}-\ref{FM})
some of which hold in a more general context.
We also introduce the notions of zero magma and
zero magma homomorphism (see Definition \ref{definition-zeromagma}).
In the end of this section, we apply Theorem \ref{genmaintheorem} and
Theorem \ref{relgenmaintheorem} to several
different cases of magma algebras (see Corollary \ref{cor1},
Corollary \ref{cor2} and Examples \ref{groupexample}-\ref{example6}).

\begin{defn}\label{definition-magma}
For the rest of the section, let $G$ be a magma.
By this we mean that $G$ is a set equipped
with a binary operation $G \times G \ni (g,g') \mapsto gg' \in G$.
We consider the empty set to be a magma.
By a submagma of $G$ we mean a subset of $G$ which is closed
under the binary operation on $G$.
Let $H$ be another magma.
The product set $G \times H$ has a natural
structure of a magma induced by the
binary operations on $G$ and $H$.
If $f$ is a subset of $G \times H$ and $h \in H$,
then we let $f^{-1}(h)$ denote the collection of $g \in G$
such that $(g,h) \in f$.
We let $S(G \times H)$ denote the partially ordered set of
submagmas of $G \times H$ ordered by inclusion.
We say that a function $f : G \rightarrow H$ is a
homomorphism of magmas if $f(gg') = f(g)f(g')$
for all $g,g' \in G$. We let $\hom(G,H)$
denote the set of magma homomorphisms from $G$ to $H$.
\end{defn}

\begin{defn}\label{definition-ring}
For the rest of the section, let $R$ be an associative ring
equipped with a $G$-\emph{filter} $(V_g)_{g \in G}$.
By this we mean that $(V_g)_{g \in G}$
is a collection of additive subgroups of $R$
satisfying $V_g V_{g'} \subseteq V_{gg'}$ for all $g,g' \in G$.
We say that such a $G$-filter is \emph{nonzero} (\emph{strong}) if
$V_g \neq \{ 0 \}$ for all $g \in G$ ($V_g V_{g'} = V_{gg'}$ for all $g,g' \in G$).
Furthermore, we say that an $H$-filter
$(W_h)_{h \in H}$ on $R$ is \emph{elementary}
(with respect to $(V_g)_{g \in G}$) if
$W_h = \sum_{V_g \subseteq W_h} V_g$
for all $h \in H$.
We let $G(R)$ denote the partially ordered set of $G$-filters
on $R$, the ordering defined by saying that
$(V_g)_{g \in G} \leq (W_g)_{g \in G}$
if $V_g \subseteq W_g$ for all $g \in G$.
We say that the $G$-filter $(V_g)_{g \in G}$ on $R$ is
a $G$-\emph{grading} if $R = \oplus_{g \in G} V_g$.
A $G$-grading on $R$ is called nonzero (strong)
if it is nonzero (strong) as a $G$-filter.
\end{defn}

\begin{prop}\label{fGH}
The function $F : S(G \times H) \rightarrow H(R)$
defined by $F(f)_h = \sum_{g \in f^{-1}(h)} V_g$, for $h \in H$
and $f \in S(G \times H)$,
is a homomorphism of partially ordered sets.
If $(V_g)_{g \in G}$ is a grading on $R$ and
$f : G \rightarrow H$ is a homomorphism of magmas,
then $F(f)$ is an $H$-grading on $R$.
\end{prop}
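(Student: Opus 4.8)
The plan is to verify the three claims in turn: that $F$ is well-defined and order-preserving into $H(R)$, that $F$ lands in $H$-\emph{gradings} when the input is a grading and $f$ is a homomorphism, and along the way to record the elementary condition for later use. First I would check that for each $f \in S(G \times H)$ the family $F(f) = \bigl( \sum_{g \in f^{-1}(h)} V_g \bigr)_{h \in H}$ is an $H$-filter. Each $F(f)_h$ is a sum of additive subgroups of $R$, hence an additive subgroup, so only the filtration inequality needs attention. Given $h, h' \in H$, I compute $F(f)_h \cdot F(f)_{h'} = \bigl( \sum_{g \in f^{-1}(h)} V_g \bigr)\bigl( \sum_{g' \in f^{-1}(h')} V_{g'} \bigr) \subseteq \sum_{g \in f^{-1}(h),\, g' \in f^{-1}(h')} V_g V_{g'} \subseteq \sum_{g \in f^{-1}(h),\, g' \in f^{-1}(h')} V_{gg'}$, using the $G$-filter axiom $V_g V_{g'} \subseteq V_{gg'}$. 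The key point is that whenever $(g,h) \in f$ and $(g',h') \in f$, closedness of the submagma $f$ under the product on $G \times H$ gives $(gg', hh') \in f$, i.e. $gg' \in f^{-1}(hh')$; hence the last sum is contained in $\sum_{u \in f^{-1}(hh')} V_u = F(f)_{hh'}$, as required.

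Next I would show $F$ is order-preserving. If $f \subseteq f'$ in $S(G \times H)$, then $f^{-1}(h) \subseteq f'^{-1}(h)$ for every $h \in H$, so $F(f)_h = \sum_{g \in f^{-1}(h)} V_g \subseteq \sum_{g \in f'^{-1}(h)} V_g = F(f')_h$, whence $F(f) \leq F(f')$ in the order on $H(R)$ from Definition \ref{definition-ring}. (Strictly, $F$ is a morphism of posets, i.e. a monotone map; the word "homomorphism" here is in that sense.)

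Finally, for the grading statement, suppose $(V_g)_{g \in G}$ is a $G$-grading, so $R = \bigoplus_{g \in G} V_g$, and suppose $f : G \rightarrow H$ is a magma homomorphism; here I identify the homomorphism $f$ with its graph $\{(g, f(g)) : g \in G\} \subseteq G \times H$, which is a submagma precisely because $f(gg') = f(g)f(g')$, so that $F(f)$ makes sense. For this $f$ the fibres partition $G$: $f^{-1}(h) = \{ g \in G : f(g) = h \}$, and these sets are pairwise disjoint with union $G$. Therefore $\sum_{h \in H} F(f)_h = \sum_{h \in H} \sum_{g \in f^{-1}(h)} V_g = \sum_{g \in G} V_g = R$, and the sum is direct because the summands $V_g$ over the disjoint fibres reassemble the original direct sum $\bigoplus_{g \in G} V_g$. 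Hence $R = \bigoplus_{h \in H} F(f)_h$, so $F(f)$ is an $H$-grading. The only subtle step is the closedness-of-the-graph argument in the first paragraph linking the magma structure on $G \times H$ to the filtration inequality; the rest is bookkeeping about sums of subgroups and disjoint fibres.
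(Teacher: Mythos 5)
Your proof is correct and follows essentially the same route as the paper: the same computation $F(f)_h F(f)_{h'} \subseteq \sum V_{gg'} \subseteq F(f)_{hh'}$ using closedness of the submagma, the same fibre-inclusion argument for monotonicity, and the same disjoint-fibre regrouping of $\bigoplus_{g \in G} V_g$ for the grading claim. Your explicit remark that a magma homomorphism is identified with its graph (a submagma by $f(gg')=f(g)f(g')$) is a useful clarification the paper leaves implicit, but it does not change the argument.
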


\begin{proof}
First we show that $F$ is well defined.
Take $h,h' \in H$ and a submagma $f$ of $G \times H$. Then
$$F(f)_h F(f)_{h'} = \sum_{ \tiny{\begin{array}{c}
                                g \in f^{-1}(h) \\
                                g' \in f^{-1}(h')
                              \end{array}} }
V_g V_{g'} \subseteq \sum_{ \tiny{\begin{array}{c}
                                g \in f^{-1}(h) \\
                                g' \in f^{-1}(h')
                              \end{array}} } V_{gg'} \subseteq$$
$$\subseteq \sum_{g'' \in f^{-1}(hh')} V_{g''} = F(f)_{hh'}.$$
Therefore $F(f)$ is an $H$-filter on $R$.
Now we show that $F$ respects inclusion.
Suppose that $f'$ is another submagma of $G \times H$
such that $f \subseteq f'$. Then
$$F(f)_h = \sum_{g \in f^{-1}(h)} V_g \subseteq \sum_{g \in f'^{-1}(h)} V_g
= F(f')_h.$$
Therefore $F(f) \leq F(f')$. If $f : G \rightarrow H$ is a
homomorphism of magmas and $(V_g)_{g \in G}$
is a $G$-grading on $R$, then the sets
$f^{-1}(h)$, for $h \in H$, are pairwise disjoint
and cover $G$.
Therefore, we get that
$R = \oplus_{g \in G} V_g = \oplus_{h \in H}
\left( \oplus_{g \in f^{-1}(h)} V_g \right) =
\oplus_{h \in H} F(f)_h.$
\end{proof}

\begin{prop}\label{fGHH}
Suppose that the $G$-filter $(V_g)_{g \in G}$ on $R$ is strong.
Then the function $M : H(R) \rightarrow S(G \times H)$
defined by saying that $(g,h) \in M( (W_h)_{h \in H} )$,
for an $H$-filter $(W_h)_{h \in H}$ on $R$, precisely
when $V_g \subseteq W_h$, is a homomorphism of
partially ordered sets.
If $(V_g)_{g \in G}$ is nonzero, $(W_h)_{h \in H}$
is a grading on $R$ and there to each $g \in G$
is $h \in H$ with $V_g \subseteq W_h$,
then $M( (W_h)_{h \in H} )$ is a homomorphism of magmas.
\end{prop}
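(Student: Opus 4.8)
The plan is to establish the three assertions in order: that $M$ takes values in $S(G\times H)$, that it is order-preserving, and that under the additional hypotheses the submagma $M((W_h)_{h\in H})$ is (the graph of) a magma homomorphism $G\to H$.

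\emph{Well-definedness.} Fix an $H$-filter $(W_h)_{h\in H}$ on $R$ and suppose that $(g_1,h_1)$ and $(g_2,h_2)$ both lie in $M((W_h)_{h\in H})$, that is, $V_{g_1}\subseteq W_{h_1}$ and $V_{g_2}\subseteq W_{h_2}$. This is the one place where strongness of $(V_g)_{g\in G}$ is used: since $V_{g_1g_2}=V_{g_1}V_{g_2}$ and $(W_h)_{h\in H}$ is an $H$-filter,
\[
V_{g_1g_2}=V_{g_1}V_{g_2}\subseteq W_{h_1}W_{h_2}\subseteq W_{h_1h_2},
\]
so $(g_1g_2,h_1h_2)\in M((W_h)_{h\in H})$. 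Hence $M((W_h)_{h\in H})$ is closed under the operation of $G\times H$, i.e. it is a submagma, and $M$ is well defined. Order-preservation is then immediate: if $(W_h)_{h\in H}\leq(W'_h)_{h\in H}$, then $V_g\subseteq W_h\subseteq W'_h$ shows that every pair in $M((W_h)_{h\in H})$ lies in $M((W'_h)_{h\in H})$.

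\emph{The homomorphism claim.} Write $f=M((W_h)_{h\in H})$. I would first argue that $f$ is the graph of a function $G\to H$. Its domain is all of $G$ by the hypothesis that every $V_g$ is contained in some $W_h$. It is single-valued because $(W_h)_{h\in H}$ is a grading and $(V_g)_{g\in G}$ is nonzero: if $(g,h)$ and $(g,h')$ both lie in $f$ with $h\neq h'$, then $V_g\subseteq W_h\cap W_{h'}=\{0\}$ by directness of $R=\oplus_{h\in H}W_h$, contradicting $V_g\neq\{0\}$. Thus $f$ determines a function $G\to H$, and the submagma property proved above says exactly that $f(g_1g_2)=f(g_1)f(g_2)$ for all $g_1,g_2\in G$; that is, $f\in\hom(G,H)$.

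The computations are short and I do not expect a genuine obstacle. The only points worth stating with care are (i) that strongness — not merely the filter inclusion $V_gV_{g'}\subseteq V_{gg'}$ — is what makes $M$ well defined, since from $V_g\subseteq W_h$ and $V_{g'}\subseteq W_{h'}$ one needs the equality $V_{gg'}=V_gV_{g'}$ in order to land inside $W_{hh'}$; and (ii) keeping track of which of the three extra hypotheses (nonzero, grading, and every $V_g$ contained in some $W_h$) supplies which half of the statement that $f$ is a function. Combined with Proposition \ref{fGH}, this is the ingredient that will later be used to produce the bijection of Theorem \ref{genmaintheorem}, once one checks that $F$ and $M$ restrict to mutually inverse maps on the relevant subsets.
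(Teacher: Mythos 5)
Your proposal is correct and follows essentially the same route as the paper's proof: strongness gives $V_{gg'}=V_gV_{g'}\subseteq W_hW_{h'}\subseteq W_{hh'}$ for closure, order-preservation is the same inclusion check, and single-valuedness comes from the directness of the grading together with $V_g\neq\{0\}$. You are slightly more explicit than the paper in noting that the graph being both a function and a submagma is exactly the statement that it is a magma homomorphism, but this is the same argument.
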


\begin{proof}
First we show that $M$ is well defined.
Suppose that $(W_h)_{h \in H}$ is an $H$-filter on $R$
and that $(g,h)$ and $(g',h')$ belong to $M( (W_h)_{h \in H} )$.
Then $V_g \subseteq W_h$ and $V_{g'} \subseteq W_{h'}$.
Hence, since the $G$-filter $(V_g)_{g \in G}$ is strong, we get that
$V_{gg'} = V_g V_{g'} \subseteq W_h W_{h'} \subseteq W_{hh'}$.
Therefore $(gg',hh') \in M( (W_h)_{h \in H} )$.

Now we show that $M$ respects the partial orders.
Suppose that $(W'_h)_{h \in H}$ is another $H$-filter on $R$
such that $(W_h)_{h \in H} \leq (W'_h)_{h \in H}$.
Take $(g,h) \in M( (W_h)_{h \in H} )$.
Then $V_g \subseteq W_h \subseteq W'_h$ and hence
$(g,h) \in M( (W'_h)_{h \in H} )$.
Therefore we get that
$M( (W_h)_{h \in H} ) \subseteq M( (W'_h)_{h \in H} )$.

Now suppose that $(V_g)_{g \in G}$ is nonzero,
$(W_h)_{h \in H}$ is an $H$-grading on $R$,
and there to each $g \in G$ is $h \in H$ with $V_g \subseteq W_h$.
We show is that $M( (W_h)_{h \in H} )$ is a function.
Seeking a contradiction, suppose that there are $h,h' \in H$
with $h \neq h'$ and $V_g \subseteq W_h$ and $V_g \subseteq W_{h'}$.
Then, since $(W_h)_{h \in H}$ is an $H$-grading on $R$,
we get that $\{ 0 \} \subsetneq V_g \subseteq W_h \cap W_{h'} = \{ 0 \}$
which is a contradiction.
\end{proof}

\begin{prop}\label{MF}
If $f \in S(G \times H)$, then $M(F(f)) \supseteq f$
with equality if $(V_g)_{g \in G}$ is a nonzero
strong $G$-grading on $R$.
\end{prop}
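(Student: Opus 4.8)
The plan is to work directly from the definitions of $F$ and $M$. Unwinding them, $M(F(f))$ consists exactly of those pairs $(g,h) \in G \times H$ for which $V_g \subseteq F(f)_h = \sum_{g' \in f^{-1}(h)} V_{g'}$. So the asserted inclusion amounts to checking that every $(g,h) \in f$ satisfies this condition, while the equality (under the extra hypothesis) amounts to the converse implication, that the condition forces $g \in f^{-1}(h)$.

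For the inclusion $f \subseteq M(F(f))$ I would argue as follows. Take $(g,h) \in f$. Then by definition $g \in f^{-1}(h)$, so $V_g$ occurs as one of the summands in $F(f)_h = \sum_{g' \in f^{-1}(h)} V_{g'}$; hence $V_g \subseteq F(f)_h$, which by the definition of $M$ says precisely that $(g,h) \in M(F(f))$. Note that $F(f)$ is an $H$-filter on $R$ by Proposition \ref{fGH}, so $M(F(f))$ makes sense as a subset of $G \times H$, and when $(V_g)_{g \in G}$ is moreover strong it is a submagma by Proposition \ref{fGHH}.

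For the reverse inclusion under the stated hypothesis, assume $(V_g)_{g \in G}$ is a nonzero strong $G$-grading, so that $R = \oplus_{g \in G} V_g$ and $V_g \neq \{0\}$ for every $g \in G$. Let $(g,h) \in M(F(f))$, i.e. $V_g \subseteq \sum_{g' \in f^{-1}(h)} V_{g'}$. Seeking a contradiction, suppose that $g \notin f^{-1}(h)$. Then every index $g'$ appearing in the sum is different from $g$, so $\sum_{g' \in f^{-1}(h)} V_{g'} \subseteq \sum_{g' \neq g} V_{g'}$, and since the sum $R = \oplus_{g' \in G} V_{g'}$ is direct, the right-hand side meets $V_g$ only in $\{0\}$. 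Combining these, $V_g \subseteq V_g \cap \sum_{g' \neq g} V_{g'} = \{0\}$, contradicting $V_g \neq \{0\}$. Hence $g \in f^{-1}(h)$, that is $(g,h) \in f$, and therefore $M(F(f)) \subseteq f$, giving the desired equality.

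The one point worth remarking is that strongness is not actually invoked in the displayed argument for equality: directness of the grading together with $V_g \neq \{0\}$ already suffices, and strongness enters only to guarantee, via Proposition \ref{fGHH}, that $M$ takes values in $S(G \times H)$, so that $M(F(f)) = f$ is an identity of submagmas rather than merely of subsets. I do not expect a genuine obstacle here; the only step requiring a little care is the use of directness of $R = \oplus_{g \in G} V_g$ to separate the component $V_g$ from the remaining homogeneous components.
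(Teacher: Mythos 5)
Your proof is correct and follows essentially the same route as the paper: the inclusion $f \subseteq M(F(f))$ is read off from the definitions, and the reverse inclusion uses directness of $R = \oplus_{g \in G} V_g$ together with $V_g \neq \{0\}$ to force $g \in f^{-1}(h)$. Your write-up merely makes explicit the directness argument that the paper compresses into ``$V_g = V_{g'}$ for some $g' \in f^{-1}(h)$,'' and your side remark that strongness is only needed so that $M$ lands in $S(G \times H)$ is consistent with the paper's setup.
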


\begin{proof}
Take $f \in S(G \times H)$. Then
$M(F(f)) = \{ (g,h) \in G \times H \mid
V_g \subseteq \sum_{g' \in f^{-1}(h)} V_{g'} \}
\supseteq f$. Now suppose that $(V_g)_{g \in G}$ is a nonzero
strong $G$-grading on $R$. Take $(g,h) \in G \times H$
such that $V_g \subseteq \sum_{g' \in f^{-1}(h)} V_{g'}$.
Then $V_g = V_{g'}$ for some $g' \in f^{-1}(h)$.
But this implies that $g = g' \in f^{-1}(h)$
and hence $(g,h) \in f$.
Therefore $M(F(f)) \subseteq f$.
\end{proof}

\begin{prop}\label{FM}
If $(V_g)_{g \in G}$ is a strong $G$-filter on $R$
and $(W_h)_{h \in H}$ is an elementary $H$-filter on $R$,
then $F( M( (W_h)_{h \in H} ) ) = (W_h)_{h \in H}$.
\end{prop}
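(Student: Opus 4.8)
The plan is to unwind the definitions of $F$ and $M$ and then read off the hypothesis that $(W_h)_{h \in H}$ is elementary. First I would observe that, since $(V_g)_{g \in G}$ is a strong $G$-filter on $R$, Proposition \ref{fGHH} guarantees that $f = M( (W_h)_{h \in H} )$ is a genuine submagma of $G \times H$; hence $F(f)$ is a well-defined $H$-filter on $R$ and the asserted equality makes sense.

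Now fix $h \in H$. Straight from the definition of $M$, an element $g \in G$ lies in $f^{-1}(h)$ exactly when $(g,h) \in M( (W_h)_{h \in H} )$, which is to say exactly when $V_g \subseteq W_h$. Feeding this description of $f^{-1}(h)$ into the definition of $F$ yields
$$F(f)_h = \sum_{g \in f^{-1}(h)} V_g = \sum_{V_g \subseteq W_h} V_g ,$$
and since $(W_h)_{h \in H}$ is an elementary $H$-filter on $R$, the right-hand side is precisely $W_h$. As $h \in H$ was arbitrary, this gives $F( M( (W_h)_{h \in H} ) ) = (W_h)_{h \in H}$, as required.

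I expect there to be essentially no serious obstacle here: the whole argument is the composition of the two set-level descriptions followed by a direct appeal to the defining condition of an elementary filter. The only point that needs a little care is the bookkeeping of where each hypothesis is used — "strong" enters only through Proposition \ref{fGHH}, to ensure that $M( (W_h)_{h \in H} )$ is a submagma so that $F$ may be applied to it, whereas the equality $F(f)_h = W_h$ itself uses nothing beyond the elementariness of $(W_h)_{h \in H}$.
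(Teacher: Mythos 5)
Your proof is correct and coincides with the paper's own argument: unwind the definitions of $M$ and $F$ to see that $F(M((W_h)_{h \in H}))_{h'} = \sum_{V_g \subseteq W_{h'}} V_g$, then invoke elementariness. Your explicit remark that strongness is used only (via Proposition \ref{fGHH}) to ensure $M((W_h)_{h \in H})$ is a submagma, so that $F$ may be applied, is a nice piece of bookkeeping that the paper leaves implicit.
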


\begin{proof}
Take $h' \in H$. Then
$F( M( (W_h)_{h \in H} ) )_{h'} = \sum_{g \in M((W_h)_{h \in H})^{-1}(h')} V_g
= \sum_{V_g \subseteq W_{h'}} V_g = W_{h'}$.
\end{proof}

\begin{defn}\label{definition-zeromagma}
Let $G$ be a {\it zero magma}.
By this we mean that $G$ is equipped with a {\it zero element},
that is an element $0$ satisfying $0g = g0 = 0$ for all $g \in G$.
Let $H$ be another zero magma.
A subset $f$ of $G \times H$ is called a {\it zero submagma} if
$f^{-1}(0) = 0$ and for all $(g,h)$ and $(g',h')$ in $f$ with $gg' \neq 0$,
the element $(gg',hh')$ belongs to $f$.
Let  $S_0(G \times H)$ denote the partially ordered set of
zero submagmas of $G \times H$ ordered by inclusion.
A function $f : G \rightarrow H$ is called a {\it zero magma homomorphism}
if the graph of $f$ considered as a subset of $G \times H$
belongs to $S_0(G \times H)$. The collection of zero magma homomorphisms
$G \rightarrow H$ is denoted $\hom_0(G,H)$.
Note that a function $f : G \rightarrow H$
with the property that $f^{-1}(0)=0$ is a zero
magma homomorphism precisely when $f(gg') = f(g)f(g')$
for all $g,g' \in G$ with $gg' \neq 0$.
We let $G_0(R)$ denote the partially ordered set of $G$-filters
$(W_g)_{g \in G}$ on $R$ satisfying $W_0 = V_0$.
We say that such a $G$-filter is {\it nonzero} if $W_g \neq \{ 0 \}$
for all nonzero $g \in G$. A grading $(W_g)_{g \in G}$ on $R$
is called nonzero if it is nonzero as a $G$-filter on $R$.
\end{defn}

\begin{rem}
Not all zero magma homomorphisms
are magma homomorphisms.
In fact, let $G = \{ a,b,0 \}$ and $H = \{ c,0 \}$
be two magmas equipped with rules of composition
defined by
$a a = a$, $bb = b$, $ab = ba = a0 = 0a = b0 = 0b = 00 = 0$
and
$cc = c$, $c0 = 0c = 00 = 0$ respectively.
Define a function $f : G \rightarrow H$ by
$f(a) = f(b) = c$ and $f(0)=0$.
Since $f(aa)=f(a)=c=cc=f(a)f(a)$
and $f(bb)=f(b)=c=cc=f(b)f(b)$
it follows that $f$ is a zero magma homomorphism.
However, since $f(ab) = f(0) = 0 \neq c = cc = f(a)f(b),$
$f$ is not a magma homomorphism.
\end{rem}

\subsection*{Proof of Theorem \ref{genmaintheorem} and
Theorem \ref{relgenmaintheorem}.}
By Proposition \ref{MF} and Proposition \ref{FM}
we get that $MF = {\rm id}_{S(G \times H)}$
and $FM = {\rm id}_{H(R)}$
respectively. Therefore the ''nonzero'' version of Theorem \ref{relgenmaintheorem} follows.
The ''nonzero'' version of Theorem \ref{genmaintheorem} follows from the same propositions
by restriction of the maps $F$ and $M$ to respectively $\hom(G,H)$
and the set of elementary $H$-gradings on $R$.
The ''zero'' versions of Theorem \ref{relgenmaintheorem} and Theorem \ref{genmaintheorem}
follow from the easily checked fact
that $M_0 F_0 = {\rm id}_{S_0(G \times H)}$
and $F_0 M_0 = {\rm id}_{H_0(R)}$ where $F_0$ and $M_0$
denote the restrictions of $F$ and $M$ to $S_0(G \times H)$
and $H_0(R)$ respectively. \hfill $\square$

\begin{defn}
Let $G$ be a magma and $K$ a field.
Recall that the {\it magma algebra} $K[G]$ of $G$ over $K$
is the collection of formal sums
$\sum_{g \in G} a_g g$, where $a_g \in K$, for $g \in G$,
are chosen so that all but finitely many of them are nonzero.
The addition and multiplication on $K[G]$ is defined by
$
\sum_{g \in G} a_s  s
+
\sum_{t \in G} b_t t
= \sum_{g \in G} (a_g + b_g)g
$
and
$
\sum_{s \in G} a_s s
\sum_{t \in G} b_t t
= \sum_{g \in G}
\sum_{st=g} a_s b_t g$
respectively, for all $\sum_{s \in G} a_s s \in K[G]$ and
all $\sum_{t \in G} b_t t \in K[G]$.
For the rest of the article, we fix the strong $G$-grading
$(V_g)_{g \in G}$ on $K[G]$ defined by putting
$V_g = Kg$ for all $g \in G$.
\end{defn}

\begin{cor}\label{cor1}
If $G$ and $H$ are (zero) magmas,
then there is a bijection between the set of
elementary (zero) $H$-gradings on $K[G]$ and the
set of (zero) magma homomorphisms from $G$ to $H$.
\end{cor}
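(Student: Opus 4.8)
The plan is to obtain Corollary \ref{cor1} as the special case $R = K[G]$ of Theorem \ref{genmaintheorem}, applied to the grading $(V_g)_{g \in G} = (Kg)_{g \in G}$ that was just fixed on $K[G]$. So the only thing to do is to check that this grading meets the hypotheses of Theorem \ref{genmaintheorem}, namely that it is a nonzero strong $G$-grading, and, in the zero-magma setting, that it has the form required by Definition \ref{definition-zeromagma}.

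First I would record that $K[G] = \oplus_{g \in G} Kg$ as an additive group, so $(Kg)_{g \in G}$ is indeed a $G$-grading once we note $V_g V_{g'} \subseteq V_{gg'}$; this is immediate from the definition of the product in $K[G]$, since the product of the basis elements $g$ and $g'$ is the basis element $gg'$. The same computation shows strongness: $V_g V_{g'} = Kg \cdot Kg' = K(gg') = V_{gg'}$ for all $g,g' \in G$. For nonzeroness, observe that $g$ is a nonzero element of $K[G]$, so $V_g = Kg \neq \{0\}$ for every $g \in G$; in the zero-magma case this in particular gives $V_g \neq \{0\}$ for every nonzero $g \in G$, and the distinguished component is $V_{0_G} = K 0_G$, as required by the conventions of Definition \ref{definition-zeromagma}.

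With these verifications in hand, the ''nonzero'' case of the corollary is literally Theorem \ref{genmaintheorem} with $R = K[G]$: the elementary $H$-gradings on $K[G]$ (with respect to $(Kg)_{g \in G}$) are in bijection with the magma homomorphisms $G \to H$. For the ''zero'' case, the distinguished $0_H$-component of any $H$-grading under consideration must coincide with $V_{0_G} = K 0_G$, i.e. we are looking at the elements of $H_0(K[G])$; the ''zero'' version of Theorem \ref{genmaintheorem} then supplies the bijection with $\hom_0(G,H)$.

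I do not anticipate any genuine difficulty here: this is a direct specialization. The only item needing a little care is the zero-magma bookkeeping — keeping the distinguished element $0_G \in G$ (a basis vector of $K[G]$) separate from the additive identity of the ring $K[G]$, and checking that ''nonzero'' is read in the sense of Definition \ref{definition-zeromagma} — but once $(Kg)_{g \in G}$ is seen to be a nonzero strong $G$-grading, Theorem \ref{genmaintheorem} does the rest.
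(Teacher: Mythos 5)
Your proposal is correct and matches the paper's route exactly: the paper fixes the strong $G$-grading $V_g = Kg$ on $K[G]$ and then states that the corollary follows immediately from Theorem \ref{genmaintheorem}; you simply spell out the verification that this grading is nonzero and strong, which the paper leaves implicit.
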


\begin{proof}
This follows immediately from Theorem \ref{genmaintheorem}.
\end{proof}

\begin{cor}\label{cor2}
If $G$ and $H$ are (zero) magmas,
then there is an isomorphism between the preordered set of
elementary (zero) $H$-filters on $K[G]$ and the preordered
set of (zero) submagmas of $G \times H$.
\end{cor}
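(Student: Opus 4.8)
The plan is to obtain the corollary as an immediate specialization of Theorem \ref{relgenmaintheorem}, exactly as Corollary \ref{cor1} was deduced from Theorem \ref{genmaintheorem}. The ring here is $R = K[G]$ and the distinguished $G$-filter is the one already fixed above, namely $(V_g)_{g \in G}$ with $V_g = Kg$. So the first --- and essentially only --- step is to check that this $(V_g)_{g \in G}$ satisfies the hypothesis of Theorem \ref{relgenmaintheorem}, i.e.\ that it is a \emph{nonzero strong $G$-grading} on $K[G]$.

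To do this I would argue as follows. It is a $G$-grading because, by the very definition of $K[G]$ as the set of finitely supported formal sums $\sum_{g \in G} a_g g$, one has $K[G] = \oplus_{g \in G} Kg = \oplus_{g \in G} V_g$. It is strong because, by the definition of the multiplication on $K[G]$, the product of the basis elements $g$ and $g'$ is the single term $gg'$, so $V_g V_{g'} = (Kg)(Kg') = K(gg') = V_{gg'}$ for all $g,g' \in G$; in particular $(V_g)_{g \in G}$ is indeed a $G$-filter. Finally it is nonzero since $V_g = Kg \neq \{0\}$ for every $g \in G$, $g$ being a basis vector over the field $K$. With this in hand, Theorem \ref{relgenmaintheorem} directly yields the asserted isomorphism of preordered sets: in the ''nonzero'' reading it is realised by the mutually inverse order-preserving maps $F$ and $M$ of Propositions \ref{fGH} and \ref{fGHH}, and in the ''zero'' reading by their restrictions $F_0$ and $M_0$, once one notes that $K[G]$ with $(V_g)_{g \in G}$ also meets the hypotheses of the zero-magma form of that theorem.

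I expect no genuine obstacle: all the work has already been done in Propositions \ref{fGH}--\ref{FM} and packaged into Theorem \ref{relgenmaintheorem}, and the present statement merely records what that theorem says for the particular ring $K[G]$. The one point deserving a moment's attention is the strongness of $(V_g)_{g \in G}$, which rests on the elementary observation that a product of two basis elements of a magma algebra is again a single basis element --- this is built into the defining formula $\sum_{st = g} a_s b_t g$ for the product in $K[G]$ --- and it is exactly this that distinguishes the canonical grading of $K[G]$ from a mere filtration.
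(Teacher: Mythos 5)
Your proposal is correct and follows the paper's route exactly: the paper also deduces this corollary immediately from Theorem \ref{relgenmaintheorem}, applied to the fixed canonical grading $V_g = Kg$ on $K[G]$. Your extra verification that this grading is nonzero and strong is a sound (and harmless) elaboration of a hypothesis the paper takes for granted when it fixes that grading.
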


\begin{proof}
This follows immediately from Theorem \ref{relgenmaintheorem}.
\end{proof}

\begin{exmp}\label{groupexample}
Suppose that $G$ and $H$ are finite groups.
By Corollary \ref{cor1}, the cardinality of the set of
elementary $H$-gradings on $K[G]$ equals $|{\rm hom}(G,H)|$.
The latter seems hard to compute for general groups
(see e.g. \cite{asa93} and \cite{yos93} for some results
concerning congruences involving this number).
However, for abelian groups $G$ and $H$ it is fairly easy.
In fact, if $G$ and $H$ are cyclic, then $|{\rm hom}(G,H)|$
equals the greatest common divisor of $|G|$ and $|H|$.
For the case of general abelian $G$ and $H$,
suppose that $p_i$ denotes the $i$th prime number and
$G = \oplus_{i \geq 1} \oplus_{j \geq 1}
\left( {\Bbb Z}_{p_i^j} \right)^{a_{ij}}$ and
$H = \oplus_{k \geq 1} \oplus_{l \geq 1}
\left( {\Bbb Z}_{p_k^l} \right)^{a_{kl}}$,
where $a_{ij}=a_{kl}=0$ for all but finitely many
$i$, $j$, $k$ and $l$.
Then $|{\rm hom}(G,H)| = \prod_{i \geq 1} \prod_{j,k \geq 1}
\left( p_i^{{\rm min}(j,k)} \right)^{a_{ij} a_{ik}}$.
\end{exmp}

\begin{rem}
From the discussion in Example \ref{groupexample} it follows that
$|{\rm hom}(G,H)| = |{\rm hom}(H,G)|$
for all finite abelian groups $G$ and $H$. It is not
clear to the author at present whether this equality
is true for all groups $G$ and $H$.
\end{rem}

\begin{exmp}\label{groupexample2}
Suppose that $G$ and $H$ are finite groups.
It is easy to see that a submagma of a finite group
is a subgroup. Therefore, by Corollary \ref{cor2},
the number of elementary $H$-filters on $K[G]$ equals
the number of subgroups of $G \times H$.
The latter seems hard to determine
(see \cite{suz67} for a discussion on the structure
of the lattice of subgroups of $G \times H$).
Therefore we here only discuss two particular cases of interest.

If $|G|$ and $|H|$ are relatively prime, then
it is easy to see that the subgroup lattice
of $G \times H$ equals the product of the subgroup lattices
of $G$ and $H$. In particular it follows that the number
of subgroups of $G \times H$ equals the product of the
number of subgroups of $G$ and the number of subgroups of $H$.

If $G$ and $H$ both are direct products of copies of
${\Bbb Z}_p$, for some prime number $p$, then
$G \times H$ is a vector space over ${\Bbb Z}_p$.
By a simple counting argument,
the number of ${\Bbb Z}_p$-subspaces of ${\Bbb Z}_p^n$
is $\sum_{k=1}^n A_k/B_k$ where
$A_k = (p^n-1)(p^n-p)...(p^n-p^{k-1})$
and $B_k = (p^k-1)(p^k-p)...(p^k-p^{k-1}).$
\end{exmp}

\begin{exmp}\label{example3}
There are 10 nonisomorphic magmas of order two.
In fact, let the two elements of such a magma be denoted $a$ and $b$
and let the notation $x_1 x_2 x_3 x_4$ mean that the magma operation is defined by
the relations $a^2=x_1$, $ab=x_2$, $ba=x_3$ and $b^2=x_4$.
Furthermore, let $t$ denote the nonidentity bijection from
$\{ a,b \}$ to $\{ a,b \}$.
Then it is easy to see that
$t : x_1 x_2 x_3 x_4 \rightarrow y_1 y_2 y_3 y_4$
precisely when $y_1 = t(x_4)$, $y_2 = t(x_3)$,
$y_3 = t(x_2)$ and $y_4 = t(x_1)$.
Using this it is a straightforward task to verify that
a complete set of representatives for the different
isomorphism classes of magmas of order two is
$aaaa$, $baaa$, $abaa$, $aaba$, $aaab$,
$aabb$, $bbaa$, $abab$, $baba$ and $abba$.
Now we determine $\hom(G,H)$ for all 100 choices
of two element magmas $G$ and $H$.
In this description, we let $x_1 x_2 x_3 x_4 y_1 y_2 y_3 y_4$ denote the
set $\hom(x_1 x_2 x_3 x_4 , y_1 y_2 y_3 y_4)$.
We let $1$ denote the identity function $\{ a , b \} \rightarrow \{ a , b \}$
and  we let $a$ (or $b$) denote the constant function $\{ a , b \} \rightarrow \{ a , b \}$
that maps both $a$ and $b$ to $a$ (or $b$).
{\small $$aaaa aaaa = \{ 1,a \} \quad aaaa baaa = \emptyset \quad
aaaa abaa = \{ a \} \quad aaaa aaba = \{ a \}$$
$$aaaa aaab = \{ a \} \quad aaaa aabb = \{ a,b \} \quad aaaa bbaa = \emptyset \quad aaaa abab = \{ a,b \}$$
$$aaaa baba = \emptyset \quad aaaa abba = \{ a \}$$
$$baaa aaaa = \{ a \} \quad baaa baaa = \{ 1 \} \quad
baaa abaa = \{ a \} \quad baaa aaba = \{ a \}$$
$$baaa aaab = \{ a,b \} \quad baaa aabb = \{ a,b \} \quad
baaa bbaa = \emptyset \quad baaa abab = \{ a,b \}$$
$$baaa baba = \emptyset \quad baaa abba = \{ a \}$$
$$abaa aaaa = \{ a \} \quad abaa baaa = \emptyset \quad
abaa abaa = \{ 1,a \} \quad abaa aaba = \{ a \}$$
$$abaa aaab = \{ a,b \} \quad abaa aabb = \{ a,b \} \quad
abaa bbaa = \emptyset \quad abaa abab = \{ a,b \}$$
$$abaa baba = \emptyset \quad abaa abba = \{ a \}$$
$$aaba aaaa = \{ a \} \quad aaba baaa = \emptyset \quad
aaba abaa = \{ a \} \quad aaba aaba = \{ 1,a \}$$
$$aaba aaab = \{ a,b \} \quad aaba aabb = \{ a,b \} \quad
aaba bbaa = \emptyset \quad aaba abab = \{ a,b \}$$
$$aaba baba = \emptyset \quad aaba abba = \{ a \}$$
$$aaab aaaa = \{ a \} \quad aaab baaa = \emptyset \quad
aaab abaa = \{ a \} \quad aaab aaba = \{ a \}$$
$$aaab aaab = \{ 1,a,b \} \quad aaab aabb = \{ a,b \} \quad
aaab bbaa = \emptyset \quad aaab abab = \{ a,b \}$$
$$aaab baba = \emptyset \quad aaab abba = \{ a \}$$
$$aabb aaaa = \{ a \} \quad aabb baaa = \emptyset \quad
aabb abaa = \{ a \} \quad aabb aaba = \{ a \}$$
$$aabb aaab = \{ a,b \} \quad aabb aabb = \{ 1,a,b,t \} \quad
aabb bbaa = \emptyset \quad aabb abab = \{ a,b \}$$
$$aabb baba = \emptyset \quad aabb abba = \{ a \}$$
$$bbaa aaaa = \{ a \} \quad bbaa baaa = \emptyset \quad
bbaa abaa = \{ a \} \quad bbaa aaba = \{ a \}$$
$$bbaa aaab = \{ a,b \} \quad bbaa aabb = \{ a,b \} \quad
bbaa bbaa = \{ 1,t \} \quad bbaa abab = \{ a,b \}$$
$$bbaa baba = \emptyset \quad bbaa abba = \{ a \}$$
$$abab aaaa = \{ a \} \quad abab baaa = \emptyset \quad
abab abaa = \{ a \} \quad abab aaba = \{ a \}$$
$$abab aaab = \{ a,b \} \quad abab aabb = \{ a,b \} \quad
abab bbaa = \emptyset \quad abab abab = \{ 1,a,b,t \}$$
$$ baba = \emptyset \quad abab abba = \{ a \}$$
$$baba aaaa = \{ a \} \quad baba baaa = \emptyset \quad
baba abaa = \{ a \} \quad baba aaba = \{ a \}$$
$$baba aaab = \{ a,b \} \quad baba aabb = \{ a,b \} \quad
baba bbaa = \emptyset \quad baba abab = \{ a,b \}$$
$$baba baba = \{ 1,t \} \quad baba abba = \{ a \}$$
$$abba aaaa = \{ a \} \quad abba baaa = \emptyset \quad
abba abaa = \{ a \} \quad abba aaba = \{ a \}$$
$$abba aaab = \{ a,b \} \quad abba aabb = \{ a,b \} \quad
abba bbaa = \emptyset \quad abba abab = \{ a,b \}$$
$$abba baba = \emptyset \quad abba abba = \{ 1,a \}$$ }
By Theorem \ref{genmaintheorem} there is a bijection
between the set of elementary $H$-gradings on $K[G]$ and
$\hom(G,H)$. This result can be applied on the list
of hom-sets above. For instance the following results hold.
\begin{itemize}
\item There are no elementary $baaa$-gradings on $K[aaaa]$.

\item There is precisely one elementary $abaa$-grading on $K[aaaa]$, namely the constant grading $a$.

\item There are no nonidentity elementary $baaa$-gradings on $K[baaa]$.

\item There are precisely two elementary $aabb$-gradings on $K[baaa]$, namely the constant gradings $a$ and $b$.

\item There are no two element magmas $G$ and $H$ such that there are precisely three different elementary $H$-gradings on $K[G]$.

\item There are precisely four different elementary  $aabb$-gradings on $K[aabb]$, 
namely the identity grading 1, the constant gradings $a$ and $b$ and the grading defined by the twist map $t$.
\end{itemize}
\end{exmp}

\begin{exmp}
Let $G$ and $H$ be magmas of order two.
We use the notation from Example \ref{example3}.
By Theorem \ref{relgenmaintheorem}, there is an isomorphism
between the preordered set of elementary $H$-filters on $K[G]$
and the preordered set of submagmas of $G \times H$.
We apply this isomorphism in some particular cases.
Recall that if $f$ is a submagma of $G \times H$, then we let $F(f)$
denote the corresponding $H$-filter of $K[G]$.

Suppose that $G = H = aaaa$. Then the preordered set of
submagmas of $G \times H$ consists of the empty set
and all the eight subsets of $G \times H$ containing $(a,a)$.
These correspond to nine different elementary $H$-filters on $K[G]$, namely:
{\small $$F(\emptyset)_a = \{ 0 \} \quad F(\emptyset)_b = \{ 0 \} $$
$$F( \{ (a,a) \} )_a = Ka \quad F( \{ (a,a) \} )_b = \{ 0 \}$$
$$F( \{ (a,a),(a,b) \} )_a = Ka \quad F( \{ (a,a),(a,b) \} )_b = Ka$$
$$F( \{ (a,a),(b,a) \} )_a = Ka+Kb \quad F( \{ (a,a),(b,a) \} )_b = \{ 0 \}$$
$$F( \{ (a,a),(b,b) \} )_a = Ka \quad F( \{ (a,a),(b,b) \} )_b = Kb$$
$$F( \{ (a,a),(a,b),(b,a) \} )_a = Ka+Kb \quad F( \{ (a,a),(a,b),(b,a) \} )_b = Ka$$
$$F( \{ (a,a),(a,b),(b,b) \} )_a = Ka \quad F( \{ (a,a),(a,b),(b,b) \} )_b = Ka+Kb$$
$$F( \{ (a,a),(b,a),(b,b) \} )_a = Ka+Kb \quad F( \{ (a,a),(b,a),(b,b) \} )_b = Kb$$
$$F( \{ (a,a),(a,b),(b,a),(b,b) \} )_a = Ka+Kb \ F( \{ (a,a),(a,b),(b,a),(b,b) \} )_b = Ka+Kb$$ }
Suppose that $G = H = abba$. Then the preordered set of submagmas
of $G \times H$ consists of the empty set and all the five
subgroups of $G \times H$.
These correspond to six different elementary $H$-filters on $K[G]$, namely:
{\small $$F(\emptyset)_a = \{ 0 \} \quad F(\emptyset)_b = \{ 0 \} $$
$$F( \{ (a,a) \} )_a = Ka \quad F( \{ (a,a) \} )_b = \{ 0 \}$$
$$F( \{ (a,a),(a,b) \} )_a = Ka \quad F( \{ (a,a),(a,b) \} )_b = Ka$$
$$F( \{ (a,a),(b,a) \} )_a = Ka+Kb \quad F( \{ (a,a),(b,a) \} )_b = \{ 0 \}$$
$$F( \{ (a,a),(b,b) \} )_a = Ka \quad F( \{ (a,a),(b,b) \} )_b = Kb$$
$$F( \{ (a,a),(a,b),(b,a),(b,b) \} )_a = Ka+Kb \ F( \{ (a,a),(a,b),(b,a),(b,b) \} )_b = Ka+Kb$$ }
Suppose that $G = abaa$ and $H = aabb$. Then the there are
six submagmas of $G \times H$ which in turn correspond
to the following $H$-filters of $K[G]$:
{\small $$F(\emptyset)_a = \{ 0 \} \quad F(\emptyset)_b = \{ 0 \} $$
$$F( \{ (a,a) \} )_a = Ka \quad F( \{ (a,a) \} )_b = \{ 0 \}$$
$$F( \{ (a,b) \} )_a = \{ 0 \} \quad F( \{ (a,b) \} )_b = Ka$$
$$F( \{ (a,a),(a,b) \} )_a = Ka \quad F( \{ (a,a),(a,b) \} )_b = Ka$$
$$F( \{ (a,a),(b,a) \} )_a = Ka+Kb \quad F( \{ (a,a),(b,a) \} )_b = \{ 0 \}$$
$$F( \{ (a,a),(a,b),(b,a),(b,b) \} )_a = Ka+Kb \ F( \{ (a,a),(a,b),(b,a),(b,b) \} )_b = Ka+Kb$$ }
\end{exmp}

\begin{exmp}\label{example5}
Suppose that $n$ is a positive integer and let $G_n$
denote the zero magma having 0 and the matrices $e_{i,j}$ (see Section 1),
for $1 \leq i,j \leq n$, as elements.
Let $m$ be another positive integer.
Now we describe all nonzero elementary $G_n$-gradings of $K[G_m]$.
Take $f \in \hom_0(G_m,G_n)$.
It is easy to see that $f$
is defined by $f(0)=0$ and $f(e_{i,j}) = e_{p(i),p(j)}$,
for $1 \leq i,j \leq m$, for a unique function
$p : \{ 1,\ldots,m \} \rightarrow \{ 1,\ldots,n \}$.
By Theorem \ref{genmaintheorem} $f$ corresponds to
the nonzero $G_n$-grading of $K[G_m]$ defined by
$$K[G_m]_{e_{i,j}} = \sum_{ \begin{array}{c}
                              k \in p^{-1}(i) \\
                               l \in p^{-1}(j)
                            \end{array} } K e_{k,l}$$
for $1 \leq i,j \leq n$. In particular, there are
$n^m$ different nonzero $G_n$-gradings of $G_m$.
\end{exmp}

\begin{exmp}\label{example6}
Suppose that $n$ is a positive integer and let $G_n$
be the zero magma from Example \ref{example5}.
Let $H$ be a zero magma with the property that
its nonzero elements form a group with $q$ elements.
Now we describe all nonzero elementary $H$-gradings of $K[G_n]$.
Take $f \in \hom_0(G_n,H)$. Since $G_n$ is generated,
as a zero magma, by $\{ e_{i,i+1} \mid i = 1,\ldots,n-1 \}$
it is clear that $f$ is determined by the $q-1$ elements
$h_i = f(e_{i,i+1})$, for $i = 1,\ldots,i-1$.
By Theorem \ref{genmaintheorem} $f$ corresponds to
the nonzero $H$-grading of $K[G_n]$ defined by
$K[G_n]_h = \sum_{e_{i,j} \in f^{-1}(h)} Ke_{i,j}$,
for all $h \in H$. In particular, there are $q^{n-1}$
different nonzero elementary $H$-gradings on $K[G_n]$.
\end{exmp}

\section{Category Filtered Rings}\label{categoryfilteredrings}

In this section, we use the results from Section \ref{magmafilteredrings}
to prove category versions of Theorem \ref{genmaintheorem}
and Theorem \ref{relgenmaintheorem} (see Theorem \ref{categorymainthm}
and Theorem \ref{relcategorymainthm}).
Then we apply these results to some particular cases
of category algebras (see Examples \ref{example7}-\ref{lastexample}).
In the end of this section, we prove Theorem \ref{groupoids}.

\begin{defn}
We define a \emph{precategory} exactly as a category
except that it does not necessarily have an identity element
at each object.
We define a \emph{subprecategory} of a precategory exactly
as a subcategory except that possible identity elements
at objects of the subprecategory need not necessarily
belong to the morphisms of the subprecategory.
Recall from Freyd \cite{fre90} that a \emph{prefunctor},
between (pre)categories, is defined exactly
as a functor except that it does not necessarily
respect identity elements.
\end{defn}

\begin{defn}
Now we recall some old and define some new notions
from the theory of category graded rings
(also see e.g. \cite{lu05}, \cite{lu06}, \cite{lu07}, \cite{oinlun08}, \cite{oinlun10} or \cite{oinlun2010}).
Let $R$ be a ring and $\Gamma$ a category.
A $\Gamma$-\emph{filter} on $R$ is a set of additive subgroups,
$(V_s)_{s \in \mor(\Gamma)}$ of $R$ such that for all $s ,t \in \mor(\Gamma)$, we have
$V_s V_t \subseteq V_{st}$, if $(s,t) \in \Gamma^{(2)}$, and $V_s V_t = \{
0 \}$ otherwise.
In this article, we say that such a filter is nonzero (or strong)
if $V_s \neq \{ 0 \}$ (or $V_s V_t = V_{st}$) for all $s \in G$
(or all $(s,t) \in \Gamma^{(2)}$).
The ring $R$ is called $\Gamma$-\emph{graded}
if there is a $\Gamma$-filter, $(V_s)_{s \in \mor(\Gamma)}$ on $R$
such that $R = \oplus_{s \in \mor(\Gamma)} V_s$. If $R$ is $\Gamma$-graded and
nonzero (strong) as a $\Gamma$-system, then it is called nonzero (strongly) $\Gamma$-graded.
If $\Lambda$ is another category, then we say that a $\Lambda$-filter
$(W_t)_{t \in \mor(\Lambda)}$ on $R$ is \emph{elementary} if
$W_t = \sum_{V_s \subseteq W_t} V_s$
for all $t \in \mor(\Lambda)$.
\end{defn}

\begin{thm}\label{categorymainthm}
Let $\Gamma$ and $\Lambda$ be categories and
$R$ a ring equipped with a nonzero strong $\Gamma$-grading.
(a) There is a bijection between the set of elementary
$\Lambda$-gradings on $R$ and the set of
prefunctors from $\Gamma$ to $\Lambda$.
(b) If $\Lambda$ has the property that for each $e \in \ob(\Gamma)$,
the only idempotent of $\Gamma_e$ is the identity element from $e$ to $e$,
then there is a bijection between the set of elementary
$\Lambda$-gradings on $R$ and the set of functors
from $\Gamma$ to $\Lambda$.
The last conclusion holds in particular if $\Lambda$
is a groupoid.
\end{thm}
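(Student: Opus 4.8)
The plan is to deduce the theorem from the magma results of Section~\ref{magmafilteredrings} by adjoining a fresh zero element to a category. Given a category $\Gamma$, let $\Gamma^{0}$ be the zero magma with underlying set $\mor(\Gamma) \cup \{0\}$, in which $0$ is declared absorbing and the product of $s,t \in \mor(\Gamma)$ is their composite $st$ when $(s,t) \in \Gamma^{(2)}$ and is $0$ otherwise; define $\Lambda^{0}$ likewise. First I would set up the translation between the two settings: a $\Gamma$-filter $(V_s)_{s \in \mor(\Gamma)}$ on $R$ is the same thing as a $\Gamma^{0}$-filter $(V_g)_{g \in \Gamma^{0}}$ with $V_0 = \{0\}$, because the two defining conditions $V_sV_t \subseteq V_{st}$ for composable $s,t$ and $V_sV_t = \{0\}$ otherwise say precisely that $V_gV_{g'} \subseteq V_{gg'}$ for the product of $\Gamma^{0}$ together with $V_0 = \{0\}$; the grading condition $R = \oplus_{s \in \mor(\Gamma)} V_s$ becomes $R = \oplus_{g \in \Gamma^{0}} V_g$ once $V_0 = \{0\}$; and ``nonzero'' and ``strong'' transfer verbatim, since for a non-composable pair $V_sV_t = \{0\} = V_0$ is exactly the value of their product in $\Gamma^{0}$. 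The same translation matches the elementary $\Lambda$-gradings on $R$ with the elementary $\Lambda^{0}$-gradings $(W_g)_{g \in \Lambda^{0}}$ satisfying $W_0 = V_0 = \{0\}$.

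The core of part~(a) is the identification of $\hom_0(\Gamma^{0},\Lambda^{0})$ with the set of prefunctors from $\Gamma$ to $\Lambda$. On one hand, a prefunctor $F : \Gamma \to \Lambda$ gives the function $f : \Gamma^{0} \to \Lambda^{0}$ with $f(0) = 0$ and $f(s) = F(s)$ for $s \in \mor(\Gamma)$; it has $f^{-1}(0) = \{0\}$ and $f(st) = f(s)f(t)$ for all $(s,t) \in \Gamma^{(2)}$, with $(f(s),f(t)) \in \Lambda^{(2)}$ because $F$ respects domains and codomains, so $f$ is a zero magma homomorphism by the criterion in Definition~\ref{definition-zeromagma}. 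On the other hand, given $f \in \hom_0(\Gamma^{0},\Lambda^{0})$, applying $f$ to the unit laws $s\,\mathrm{id}_{d(s)} = s$ and $\mathrm{id}_{c(s)}\,s = s$, which hold for every $s \in \mor(\Gamma)$, yields $f(s) = f(s)\,f(\mathrm{id}_{d(s)})$ and $f(s) = f(\mathrm{id}_{c(s)})\,f(s)$; the values being nonzero these are genuine composites in $\Lambda$, so comparing domains and codomains shows that $d(f(s))$ depends only on $d(s)$, that $c(f(s))$ depends only on $c(s)$, and (since $\mathrm{id}_e$ is idempotent, so that $f(\mathrm{id}_e)$ is composable with itself) that $f(\mathrm{id}_e)$ is an endomorphism of the object $d(f(\mathrm{id}_e))$. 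Then $F(e) := d(f(\mathrm{id}_e))$ on objects and $F(s) := f(s)$ on morphisms, with $F(st) = F(s)F(t)$ for composable $s,t$, defines a prefunctor $\Gamma \to \Lambda$. These two assignments are mutually inverse, so $\hom_0(\Gamma^{0},\Lambda^{0})$ is in bijection with the set of prefunctors from $\Gamma$ to $\Lambda$; together with the translation of the first paragraph, part~(a) now follows from the ``zero'' version of Theorem~\ref{genmaintheorem} applied with $G = \Gamma^{0}$ and $H = \Lambda^{0}$.

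For part~(b) it suffices to check that, under the stated hypothesis on $\Lambda$, every prefunctor $F : \Gamma \to \Lambda$ is automatically a functor; then the sets of prefunctors and of functors coincide, and (b) is a special case of (a). Indeed, for $e \in \ob(\Gamma)$ the morphism $F(\mathrm{id}_e)$ satisfies $F(\mathrm{id}_e)\,F(\mathrm{id}_e) = F(\mathrm{id}_e\,\mathrm{id}_e) = F(\mathrm{id}_e)$, hence is an idempotent of $\Lambda_{F(e)}$, which by hypothesis equals $\mathrm{id}_{F(e)}$; so $F$ preserves identities. When $\Lambda$ is a groupoid each $\Lambda_f$ is a group, whose only idempotent is the identity, so the hypothesis is then automatic.

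The step I expect to be the main obstacle is the identification in the second paragraph: a zero magma homomorphism $\Gamma^{0} \to \Lambda^{0}$ a priori carries no information about objects, and one must check that the domain/codomain bookkeeping forced by the unit laws reconstructs a unique object map turning it into a prefunctor, inverse to forgetting that map. The remaining points --- that ``nonzero'', ``strong'' and ``elementary'' pass correctly between $\Gamma$ and $\Gamma^{0}$, and that the groups $\Lambda_f$ of a groupoid have no nontrivial idempotents --- are routine.
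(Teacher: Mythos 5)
Your proposal is correct and takes essentially the same route as the paper: adjoin a zero element to $\mor(\Gamma)$ and $\mor(\Lambda)$, transfer the grading data to these zero magmas, apply the ``zero'' version of Theorem \ref{genmaintheorem}, identify zero magma homomorphisms with prefunctors, and obtain (b) from the fact that the image of an identity is an idempotent of some $\Lambda_f$. The only difference is that you work out explicitly (via the unit laws) the correspondence between zero magma homomorphisms and prefunctors, a step the paper merely asserts.
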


\begin{proof}
Suppose that $R$ is equipped with the nonzero and strong $\Gamma$-grading
$(V_s)_{s \in \mor(\Gamma)}$. Let $G$ be the magma having $\mor(\Gamma) \cup \{ 0 \}$
as elements. If we put $V_0 = \{ 0 \}$, then $(V_g)_{g \in G}$ is a nonzero magma grading on $R$.

(a) Let $(W_t)_{t \in \mor(\Lambda)}$ be an elementary $\Lambda$-grading on $R$.
If we put $H = \mor(\Lambda) \cup \{ 0 \}$ and $W_0 = \{ 0 \}$,
then $(W_h)_{h \in H}$ is a nonzero elementary magma grading on $R$.
By Theorem \ref{genmaintheorem} this $H$-grading uniquely defines a zero magma
homomorphism $M( (W_h)_{h \in H} )$ from $G$ to $H$.
But a such a homomorphism restricts uniquely to a prefunctor from $\Gamma$ to $\Lambda$.

On the other hand, given a prefunctor from $\Gamma$ to $\Lambda$, then it extends
uniquely to a zero magma homomorphism from $G$ to $H$.
This magma homomorphism defines, by Theorem \ref{genmaintheorem} again, a unique nonzero elementary
magma grading $(W_h)_{h \in H}$ on $R$.

It is clear that these constructions are inverse to each other
and hence defines a bijection between the set of nonzero elementary
$\Lambda$-gradings on $R$ and the set of
prefunctors from $\Gamma$ to $\Lambda$.

(b) The first part of the statement follows immediately from (a).
The second part of the statement follows from the first part and the
fact that the prefunctor image of an identity element is an idempotent and therefore,
by the assumptions, an identity element.
The last part of the statement follows from the second part and the fact
that the only idempotents in groups are the identity elements.
\end{proof}

\begin{thm}\label{relcategorymainthm}
If $\Gamma$ and $\Lambda$ are categories and
$R$ is a ring equipped with a nonzero strong $\Gamma$-grading,
then there is an isomorphism between the preordered set of elementary $\Lambda$-filters on $R$
and the preordered set of subprecategories of $\Gamma \times \Lambda$.
\end{thm}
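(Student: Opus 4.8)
The plan is to deduce Theorem \ref{relcategorymainthm} from the magma version, Theorem \ref{relgenmaintheorem}, in exactly the same way that Theorem \ref{categorymainthm} was deduced from Theorem \ref{genmaintheorem}. First I would reduce categories to zero magmas: given the nonzero strong $\Gamma$-grading $(V_s)_{s \in \mor(\Gamma)}$ on $R$, form the zero magma $G = \mor(\Gamma) \cup \{0\}$ with product given by composition when composable and $0$ otherwise, and set $V_0 = \{0\}$; this makes $(V_g)_{g \in G}$ a nonzero strong $G$-filter (indeed grading) on $R$ in the sense of Definition \ref{definition-zeromagma}. Similarly, for the target category $\Lambda$, put $H = \mor(\Lambda) \cup \{0\}$, a zero magma. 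By Theorem \ref{relgenmaintheorem} (in its ``zero'' version) there is an order isomorphism between the preordered set of elementary (nonzero) $H$-filters on $R$ — equivalently, those $H$-filters $(W_h)_{h\in H}$ with $W_0 = V_0 = \{0\}$ — and the preordered set $S_0(G \times H)$ of zero submagmas of $G \times H$.

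Next I would identify both sides of this isomorphism with their category-theoretic counterparts. On the filter side: an elementary $\Lambda$-filter $(W_t)_{t \in \mor(\Lambda)}$ on $R$ extends uniquely to an $H$-filter with $W_0 = \{0\}$, and conversely any $H$-filter in $H_0(R)$ restricts to a $\Lambda$-filter which is elementary with respect to the $\Gamma$-grading precisely when the $H$-filter is elementary with respect to the $G$-grading; moreover this correspondence is clearly order-preserving in both directions, so it is an isomorphism of preordered sets. On the submagma side: I claim that zero submagmas of $G \times H$ correspond bijectively, and order-isomorphically, to subprecategories of $\Gamma \times \Lambda$. A zero submagma $f \subseteq G \times H$ satisfies $f^{-1}(0) = 0$, which forces $f \cap (\{(s,t) : s = 0 \text{ or } t = 0\}) = \{(0,0)\}$, so $f \setminus \{(0,0)\}$ is a subset of $\mor(\Gamma) \times \mor(\Lambda) = \mor(\Gamma \times \Lambda)$; the closure condition ``for all $(g,h),(g',h') \in f$ with $gg' \neq 0$ we have $(gg',hh') \in f$'' translates, via the definition of the product in the zero magma, exactly into the condition that $f \setminus \{(0,0)\}$ is closed under the (partially defined) composition in $\Gamma \times \Lambda$, i.e.\ that it is the morphism set of a subprecategory. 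Conversely, the morphisms of any subprecategory of $\Gamma \times \Lambda$, together with the adjoined element $(0,0)$, form a zero submagma of $G \times H$. This assignment is inclusion-preserving both ways, hence an isomorphism of preordered sets.

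Composing the three isomorphisms — elementary $\Lambda$-filters on $R$ $\cong$ nonzero $H$-filters in $H_0(R)$ $\cong$ $S_0(G \times H)$ $\cong$ subprecategories of $\Gamma \times \Lambda$ — yields the desired isomorphism of preordered sets, completing the proof. I expect the main point requiring care to be the middle translation, namely verifying that the zero-submagma axioms for $f \subseteq G \times H$ match, term for term, the definition of subprecategory of the product (pre)category $\Gamma \times \Lambda$: one must check that a pair of morphisms $(s,t),(s',t')$ in $\Gamma \times \Lambda$ is composable exactly when $ss' \neq 0$ in $G$ \emph{and} $tt' \neq 0$ in $H$, which in turn forces $d(s) = c(s')$ and $d(t) = c(t')$ simultaneously, and that no identity morphisms are forced into $f$ — this is precisely why the correspondence lands on subprecategories rather than subcategories, matching the phenomenon already seen in part (a) of Theorem \ref{categorymainthm}. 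The rest is routine bookkeeping with the adjoined zero element and the observation that all the maps involved are monotone in both directions.
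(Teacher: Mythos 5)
Your argument is essentially the paper's own proof: adjoin a zero element to $\mor(\Gamma)$ and $\mor(\Lambda)$ to form zero magmas $G$ and $H$, set $V_0 = W_0 = \{0\}$, invoke the ``zero'' version of Theorem \ref{relgenmaintheorem}, and then translate zero submagmas of $G \times H$ into subprecategories of $\Gamma \times \Lambda$ and elementary $H$-filters in $H_0(R)$ into elementary $\Lambda$-filters, checking monotonicity both ways. The one step you single out as needing care (matching the zero-submagma axioms with closure under composition in $\Gamma \times \Lambda$) is handled with exactly the same brevity in the paper, so your proposal agrees with its proof in both structure and level of detail.
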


\begin{proof}
Suppose that $R$ is equipped with the nonzero and strong $\Gamma$-grading
$(V_s)_{s \in \mor(\Gamma)}$. Let $G$ be the magma having $\mor(\Gamma) \cup \{ 0 \}$
as elements. If we put $V_0 = \{ 0 \}$, then $(V_g)_{g \in G}$ is a nonzero magma grading on $R$.

Let $(W_t)_{t \in \mor(\Lambda)}$ be a nonzero elementary $\Lambda$-filter on $R$.
If we put $H = \mor(\Lambda) \cup \{ 0 \}$ and $W_0 = \{ 0 \}$,
then $(W_h)_{h \in H}$ is a nonzero elementary magma filter on $R$.
By Theorem \ref{relgenmaintheorem} this $H$-filter uniquely defines a zero submagma
$M( (W_h)_{h \in H} )$ of $G \times H$.
But the nonzero elements of a zero submagma of $G \times H$ uniquely defines
a subprecategory of $\Gamma \times \Lambda$.

On the other hand, given a subprecategory of $\Gamma \times \Lambda$, then it extends
uniquely to a zero submagma of $G \times H$.
This zero submagma defines, by Theorem \ref{relgenmaintheorem} again, a unique nonzero elementary
magma filter $(W_h)_{h \in H}$ on $R$.

It is clear that these constructions are inverse to each other
and that they respect inclusion.
Therefore it defines an isomorphism between the preordered set of $\Lambda$-filters on $R$
and the preordered set of subprecategories of $\Gamma \times \Lambda$.
\end{proof}

\begin{exmp}\label{example7}
Suppose that $\Gamma$ and $\Lambda$ are finite, connected and thin cate\-gories.
If $m = |\ob(\Gamma)|$ and $n = |\ob(\Lambda)|$, then
there are $n^m$ elementary $\Lambda$-gradings on $K[\Gamma]$.

Indeed, by Theorem \ref{categorymainthm}, there is a bijection between
the set of elementary $\Lambda$-gradings on $K[\Gamma]$ and
the set of functors $\Gamma \rightarrow \Lambda$. But since
$\Gamma$ and $\Lambda$ are connected and thin, such a functor
is completely determined by its restriction to $\ob(\Gamma)$.
So we seek the cardinality of the set of functions
from $\{ 1,\ldots,m \}$ to $\{ 1,\ldots,n \}$.
Therefore there are exactly $n^m$ different functors $\Gamma \rightarrow \Lambda$.

Of these gradings, precisely $\frac{1}{n!} \sum_{i=0}^n (-1)^i
{n \choose i} (n-i)^m$ are nonzero.
In fact, this follows from the above and the
observation that an elementary $\Lambda$-grading on $K[\Gamma]$
is nonzero precisely when the corresponding functor $\Gamma \rightarrow \Lambda$
is surjective on objects. So we seek the cardinality of the set of
surjective functions from $\{ 1,\ldots,m \}$ to $\{ 1,\ldots,n \}$.
But this is the so called Stirling number which can be
calculated in accordance with the claim (see e.g. \cite{sharp}).
\end{exmp}

\begin{rem}
The first part of Example \ref{example7}
also follows directly from Example \ref{example5}
by noting that if we adjoin a zero element
to the unique connected thin category with $n$
elements, then we get the magma $G_n$.
\end{rem}

\begin{rem}
It is easy to see that if $\Gamma$ and $\Lambda$ are finite, connected and thin cate\-gories,
then an elementary $\Lambda$-grading of $K[\Gamma]$ is nonzero if and only if it
is strong. Therefore, the second part of Example \ref{example7} also
counts the number of strong elementary $\Lambda$-gradings on $K[\Gamma]$.
\end{rem}

\begin{exmp}\label{categoryfunctor}
Let $\Gamma$ be the category having two elements $a$ and $b$ and
nonidentity morphisms $\alpha : a \rightarrow a$, $\beta : a \rightarrow b$
and $\gamma : a \rightarrow b$ subject to the following relations
$\alpha^2 = {\rm id}_a$ and $\beta \alpha = \gamma$.
Let $\Lambda$ be the category having one object $c$ and one nonidentity
morphism $\delta : c \rightarrow c$ subject to the relation $\delta^2 = {\rm id}_c$.
Note that this means that $\Lambda$ is the group with two elements.
There are four different functors $F : \Gamma \rightarrow \Lambda$.
Indeed, $F$ must map all objects of $\Lambda$ to $c$.
There are four different ways for $F$ to map the morphisms of $\Gamma$, namely:
$$F(\alpha) = F(\beta) = F(\gamma) = {\rm id}_c$$
$$F(\alpha) = \delta \quad F(\beta) = {\rm id}_c \quad F(\gamma) = \delta$$
$$F(\alpha) = {\rm id}_c \quad F(\beta) = \delta \quad F(\gamma) = \delta$$
$$F(\alpha) = \delta \quad F(\beta) = \delta \quad F(\gamma) = {\rm id}_c.$$
These four functors correspond, by Theorem \ref{categorymainthm}(b), to the following
elementary $\Lambda$-gradings of $K[\Gamma]$:
$$K[\Gamma]_{{\rm id}_c} = K[\Gamma] \quad K[\Gamma]_{\delta} = \{ 0 \}$$
$$K[\Gamma]_{{\rm id}_c} = K{\rm id}_a + K{\rm id}_b + K\beta \quad K[\Gamma]_{\delta} = K\alpha + K\gamma$$
$$K[\Gamma]_{{\rm id}_c} = K{\rm id}_a + K{\rm id}_b + K\alpha \quad K[\Gamma]_{\delta} = K\beta + K\gamma$$
$$K[\Gamma]_{{\rm id}_c} = K{\rm id}_a + K{\rm id}_b + K\gamma \quad K[\Gamma]_{\delta} = K\alpha + K\beta$$
\end{exmp}

\begin{exmp}
Let the categories $\Gamma$ and $\Lambda$ be defined as in Example \ref{categoryfunctor}.
Now we illustrate Theorem \ref{relcategorymainthm} by considering two subprecategories
of $\Gamma \times \Lambda$, both of them having $\ob(\Gamma) \times \ob(\Lambda)$ as
set of objects.

First suppose that the set of morphisms of the subprecategory is:
$$\{   ({\rm id}_a,{\rm id}_c) , \ (\beta,{\rm id}_c) , \ (\beta,\delta) , \
({\rm id}_b, \ {\rm id}_c) , \ ({\rm id}_b,\delta)  \}$$
This corresponds to the following elementary $\Lambda$-filter on $K[\Gamma]$:
$$W_{{\rm id}_c} = K{\rm id}_a + K{\rm id}_b + K\beta \quad
W_{\delta} = K{\rm id}_b + K\beta$$

Next suppose that the set of morphisms of the subprecategory is:
$$\{   ({\rm id}_a,{\rm id}_c) , \ ({\rm id}_a , \delta) , \
(\alpha,{\rm id}_c) , \ (\alpha,\delta) , \
(\beta,{\rm id}_c) , \ (\beta,\delta) , \
(\gamma,{\rm id}_c) , \ (\gamma,\delta) , \
({\rm id}_b, \ {\rm id}_c) \}$$
This corresponds to the following elementary $\Lambda$-filter on $K[\Gamma]$:
$$W_{{\rm id}_c} = K{\rm id}_a + K{\rm id}_b + K\alpha + K\beta + K\gamma \quad
W_{\delta} = K{\rm id}_a + K{\rm id}_b + K\alpha + K\beta + K\gamma$$

\end{exmp}

\begin{exmp}\label{categoryfunctor1}
Let the category $\Gamma$ be defined as in Example \ref{categoryfunctor}.
Let $\Lambda$ be the category having one object $c$ and one nonidentity
morphism $\delta : c \rightarrow c$ subject to the relation $\delta^2 = \delta$.
There are eight different prefunctors $F : \Gamma \rightarrow \Lambda$.
Indeed, $F$ must map all objects of $\Lambda$ to $c$.
There are eight different ways for $F$ to map the morphisms of $\Gamma$, namely:
$$F(\alpha) = {\rm id}_c \quad F(\beta) = {\rm id}_c \quad F({\rm id}_b) = {\rm id}_c
\quad F(\gamma) = {\rm id}_c \quad F({\rm id}_a) = {\rm id}_c$$
$$F(\alpha) = \delta \quad F(\beta) = {\rm id}_c \quad F({\rm id}_b) = {\rm id}_c
\quad F(\gamma) = \delta \quad F({\rm id}_a) = \delta$$
$$F(\alpha) = {\rm id}_c \quad F(\beta) = \delta \quad F({\rm id}_b) = {\rm id}_c
\quad F(\gamma) = \delta \quad F({\rm id}_a) = {\rm id}_c$$
$$F(\alpha) = {\rm id}_c \quad F(\beta) = {\rm id}_c \quad F({\rm id}_b) = \delta
\quad F(\gamma) = {\rm id}_c \quad F({\rm id}_a) = {\rm id}_c$$
$$F(\alpha) = \delta \quad F(\beta) = \delta \quad F({\rm id}_b) = {\rm id}_c
\quad F(\gamma) = \delta \quad F({\rm id}_a) = \delta$$
$$F(\alpha) = \delta \quad F(\beta) = {\rm id}_c \quad F({\rm id}_b) = \delta
\quad F(\gamma) = \delta \quad F({\rm id}_a) = \delta$$
$$F(\alpha) = {\rm id}_c \quad F(\beta) = \delta \quad F({\rm id}_b) = \delta
\quad F(\gamma) = \delta \quad F({\rm id}_a) = {\rm id}_c$$
$$F(\alpha) = \delta \quad F(\beta) = \delta \quad F({\rm id}_b) = \delta
\quad F(\gamma) = \delta \quad F({\rm id}_a) = \delta$$
These eight functors correspond, by Theorem \ref{categorymainthm}(a), to the following
elementary $\Lambda$-gradings of $K[\Gamma]$:
$$K[\Gamma]_{{\rm id}_c} = K[\Gamma] \quad K[\Gamma]_{\delta} = \{ 0 \}$$
$$K[\Gamma]_{{\rm id}_c} = K{\rm id}_b + K\beta \quad K[\Gamma]_{\delta} = K\alpha + K\gamma + K{\rm id}_a$$
$$K[\Gamma]_{{\rm id}_c} = K{\rm id}_a + K{\rm id}_b + K\alpha \quad K[\Gamma]_{\delta} = K\beta + K\gamma$$
$$K[\Gamma]_{{\rm id}_c} = K{\rm id}_a + K\alpha + K\beta + K\gamma \quad K[\Gamma]_{\delta} = K{\rm id}_b$$
$$K[\Gamma]_{{\rm id}_c} = K{\rm id}_b \quad K[\Gamma]_{\delta} = K\alpha + K\beta + K\gamma + K{\rm id}_a$$
$$K[\Gamma]_{{\rm id}_c} = K\beta \quad K[\Gamma]_{\delta} =  K\alpha + K\gamma + K{\rm id}_a + K{\rm id}_b$$
$$K[\Gamma]_{{\rm id}_c} = K{\rm id}_a + K\alpha \quad K[\Gamma]_{\delta} = K\beta + K\gamma + K{\rm id}_b$$
$$K[\Gamma]_{{\rm id}_c} =  \{ 0 \} \quad K[\Gamma]_{\delta} = K[\Gamma]$$
\end{exmp}

\begin{exmp}\label{lastexample}
Let the categories $\Gamma$ and $\Lambda$ be defined as in Example \ref{categoryfunctor1}.
Now we again illustrate Theorem \ref{relcategorymainthm} by considering two subprecategories
of $\Gamma \times \Lambda$, both of them having $\ob(\Gamma) \times \ob(\Lambda)$ as
set of objects.

First suppose that the set of morphisms of the subprecategory is:
$$\{   ({\rm id}_a,{\rm id}_c) , \ (\beta,{\rm id}_c) , \ (\beta,\delta) , \ ({\rm id}_b,\delta)  \}$$
This corresponds to the following elementary $\Lambda$-filter on $K[\Gamma]$:
$$W_{{\rm id}_c} = K{\rm id}_a + K\beta \quad
W_{\delta} = K{\rm id}_b + K\beta$$

Next suppose that the set of morphisms of the subprecategory is:
$$\{   ({\rm id}_a,{\rm id}_c) , \ ({\rm id}_a , \delta) , \
(\alpha,{\rm id}_c) , \ (\alpha,\delta) , \
(\beta,\delta) , \ (\gamma,\delta) , \
({\rm id}_b, \ {\rm id}_c) \}$$
This corresponds to the following elementary $\Lambda$-filter on $K[\Gamma]$:
$$W_{{\rm id}_c} = K{\rm id}_a + K{\rm id}_b + K\alpha \quad
W_{\delta} = K{\rm id}_a + K{\rm id}_b + K\alpha + K\beta + K\gamma$$
\end{exmp}

\subsection*{Proof of Theorem \ref{groupoids}.}
Suppose that $\Gamma$ and $\Lambda$ are finite connected groupoids.
By Theorem \ref{categorymainthm}(b), there is a bijection between
the set of elementary $\Lambda$-gradings on $K[\Gamma]$ and the
set of functors from $\Gamma$ to $\Lambda$.
Therefore we now set out to determine the cardinality of the latter set.

First note that $\Gamma_e \cong \Gamma_{e'}$ as groups for all $e,e' \in \ob(\Gamma)$.
In fact, since $\Gamma$ is connected, there is
$E : e \rightarrow e'$ in $\mor(\Gamma)$.
Now it is easy to see that the map
$\Gamma_e \ni x \mapsto E x E^{-1} \ni \Gamma_{e'}$
is an isomorphism of groups.
Therefore, the cardinality of $\hom(\Gamma_e,\Gamma_{e'})$
is the same for all choices of $e,e' \in \ob(\Gamma)$,
and the cardinality of $\hom(\Gamma_e,\Lambda_f)$
is the same for all choices of $e \in \ob(\Gamma)$
and $f \in \ob(\Lambda)$.

Next, fix $i \in \ob(\Gamma)$. For each $j \in \ob(\Gamma)$, with $j \neq i$,
fix a morphism $\alpha_{ij} : j \rightarrow i$. Then $\Gamma$
is generated, as a groupoid, by
$X = \Gamma_i \cup \{ \alpha_{ij} \mid j \in \ob(\Gamma), \ j \neq i \}$.
In fact, supose that $\beta : k \rightarrow l$ is a morphism in $\Gamma$.
Then $\alpha_{il} \beta \alpha_{ik}^{-1} \in \Gamma_i$ and hence
$\beta \in \alpha_{il}^{-1} G_i \alpha_{ik}$.
It is also clear that there are no nontrivial products
of elements (and their inverses) in $X$ that yields an identity
element. Therefore a functor $\Gamma \rightarrow \Lambda$
is completely determined by its action on $X$.
Suppose that $e \in \ob(\Gamma)$ and $f \in \ob(\Lambda)$
and put $m = |\ob(\Lambda)|$, $n = |\ob(\Gamma)|$,
$p = |\hom(\Gamma_e,\Lambda_f)|$ and $q = |\hom(\Gamma_e,\Gamma_e)|$.
There are $m^n$ ways for the functor to map identity elements.
For each such choice there are $p q^{n-1}$ ways to map
the elements of $X$.
Therefore, there are $(p q^{n-1})^{m^n}$ elementary $\Lambda$-gradings
on the groupoid algebra $K[\Gamma]$. \hfill $\square$

\begin{rem}\label{lastremark}
If $\Gamma$ and $\Lambda$ are arbitrary
(not necessarily connected) finite groupoids,
then $\Gamma = \biguplus_{i \in I} \Gamma_i$
and $\Lambda = \biguplus_{j \in J} \Lambda_j$
for unique connected subgroupoids $\Gamma_i$,
for $i \in I$, and $\Lambda_j$, for $j \in J$,
of $\Gamma$ and $\Lambda$ respectively.
Therefore $$\hom(\Gamma,\Lambda) =
\times_{i \in I, j \in J} \hom(\Gamma_i,\Lambda_j)$$
and hence
$$| \hom(\Gamma,\Lambda) | =
\prod_{i \in I, j \in J} | \hom(\Gamma_i,\Lambda_j) |.$$
Using Theorem \ref{groupoids} on each $| \hom(\Gamma_i,\Lambda_j) |$
we can work out the cardinality of the set of
elementary $\Lambda$-gradings on $K[\Gamma]$.
\end{rem}

\end{document}